\newcommand{\lightercolor}[3]{
    \colorlet{#3}{#1!#2!white}
}
\newcommand{\darkercolor}[3]{
    \colorlet{#3}{#1!#2!black}
}
\numberwithin{equation}{section}
\theoremstyle{plain}
\newtheorem*{proposition*}{Proposition}
\newtheorem{theorem}{Theorem}
\theoremstyle{definition}
\newtheorem{defn}[theorem]{Definition}
\newtheorem{proposition}[theorem]{Proposition}
\newtheorem{lemma}[theorem]{Lemma}
\newtheorem*{thm*}{Theorem}
\newtheorem*{theorem*}{Theorem}
\newtheorem{prop}[theorem]{Proposition}
\newtheorem{corollary}[theorem]{Corollary}
\tikzset{
l/.style={gray,dashed},
ll/.style={red,dashed},
grn/.style=green!70!black,
b/.style=blue,
r/.style=red,
p/.style=dark gray}
\renewcommand{\L}{\mathcal{L}}
\newcommand{\C}{\operatorname{C}}
\newcommand{\sgn}{\operatorname{sgn}}
\newcommand{\X}{\mathbb{ X}}
\newcommand{\R}{\mathbb{ R}}
\newcommand{\D}{\mathcal{D}}
\newcommand{\maxx}{\text{max}}
\newcommand{\minn}{\text{min}}
\newcommand{\diag}{\Delta}
\newcommand{\Diag}{\Delta}
\begin{document}

\title{Medians of populations of persistence diagrams}

\author{Katharine Turner}

%
%
%
%
%



\maketitle
\begin{abstract}
Persistence diagrams are common objects in the field of Topological Data Analysis. They are topological summaries that capture both topological and geometric structure within data. Recently there has been a surge of interest in developing tools to statistically analyse populations of persistence diagrams, a process hampered by the complicated geometry of the space of persistence diagrams.
In this paper we study the median of a set of diagrams, defined as the minimizer of an appropriate cost function analogous to the sum of distances used for samples of real numbers. We then characterize the local minima of this cost function  and in doing so characterize the median. We also do some comparative analysis of the properties of the median and the mean.
\end{abstract}

%
%
%
%





\section{Introduction} 

%

Topological data analysis (TDA) is a rapidly growing field that uses multi-scale topological features to find and analyse structure in data. An important use of TDA is as a preprocessing tool, providing topological summaries that may be more tractable than the raw information, and perhaps highlight geometric and topological features that are of particular interest. Examples of applications include the analysis of the shape of human jaws \cite{gamble2010exploring}, plant root systems \cite{bendich2010computing}, shapes of calcanei bones of various primates \cite{turner2014persistent} and retrieval of trademark symbols \cite{cerri2006retrieval}. 

Persistence diagrams are a common topological summary statistic. Each diagram is a discrete summary of how the homology evolves over a nested sequence of topological spaces parameterized by some tuning parameter. Homology computes features such as the number of connected components, loops, tunnels, and void. Although persistent homology is defined via homology, the  tuning parameter captures geometric information. 



As part of the growing field of object oriented data analysis we wish to consider the structure of the space of persistence diagrams when performing statistical analysis. This is complicated by the geometry of the space of persistence diagrams - it is infinite in dimension and with no upper bound on curvature. At no point does it locally look Euclidean. The papers \cite{mileyko2011probability, turner2014frechet} study the geometric and analytic properties of the space of persistence diagrams equipped with the metric analogous to the $2$-Wasserstein distances of probability measures and to the $L^2$ distances of functions on a discrete set. We denote this space $(\D, d_2)$. In \cite{mileyko2011probability} it was shown that it is possible to define the mean and variance of ``nice'' distributions via the Fr\'{e}chet function. In \cite{turner2014frechet} it was shown that $(\D, d_2)$ was a non-negatively curved Alexandrov space and used this structure to characterize the mean of a population of persistence diagrams and to provide an algorithm to compute it. 

After the mean, the median is next most common statistic to describe the center of a distribution. Furthermore, the median is often more robust than the mean. The purpose of this paper is to provide an analogous analysis for the space of the persistence diagrams under the corresponding metric for $p=1$ (instead of $p=2$) and to characterize the median of a population of persistence diagrams. We also are interested in comparing the properties of the mean and the median of populations of persistence diagrams.

In section $2$ we establish geometric properties of the space of persistence diagrams, such as curvature and connected components, for a natural family of metrics $\{(\D, d_p)\}$ that are analogous to the $p$-Wasserstein distances on the space of probability measures and to the $L^p$ distances on the space of functions on a discrete set. Just as spaces of functions equipped with an $L^2$ metric has nicer properties to those with an $L^p$ metric with $p\neq 2$, we see that although $(\D, d_2)$ is a non-negatively curved Alexandrov space, this does not hold for $(\D, d_p)$ when $p\neq 2$.

The median of a population is the location that minimizes the average distances to each of the members of the population. In section $3$ we study the median of a population of persistence diagrams. Unfortunately the proofs in \cite{turner2014frechet} for characterizing and computing the mean require the extra Alexandrov space structure available in the case when $p=2$ (rather than $p=1$ which is the scenario for the median). This implies that we needed to develop new methods in order to characterize the median of a population of persistence diagrams. In the appendix we summarize how these new methods provide a new proof for previous results about the mean.  Another aspect where the analysis here differs from that in \cite{turner2014frechet} is that we consider persistence diagrams containing points with infinite persistence. 

We can define the mean as the location that minimizes the average distance to the sample set. Unlike the mean, the median of a population of even size is not unique.  For $N$ odd, the median of a set of real numbers $a_1, a_2, \ldots, a_N$, written in non-decreasing order, is $a_{(N+1)/{2}}$. If $N$ is even, then every number in the interval $[a_{N/{2}}, a_{(N+2)/{2}}]$ minimizes the average distance to the sample set and hence would be a valid choice as a median. To overcome this lack of uniqueness the general convention is to declare the median to be the midpoint of $[a_{{N}/{2}}, a_{(N+2)/{2}}]$. Analogous conventions could be applied for medians of an odd number of persistence diagrams. However, the statements of theory and the clarity of exposition quickly becomes less clear. For this reason we will be restricting our attention to the case when the size of the populations is odd. 

In section $4$ we compare various properties of the median to the mean. We bound the number of off diagonal points in the median compared to the mean, show the median is more robust than the mean, and prove that the mean is generically unique while the median is not.

%
%


%

\subsection{Related work}
Statistical analysis of persistence diagrams is an example of object oriented data analysis. This is an emerging area of statistics  where the goal is to  develop and apply statistical methods to objects such as functions, images, graphs or trees (e.g. \cite{lu2014object,marron2015functional, marron2014overview, ramsay2006functional,sangalli2014object,wang2007object}) while respecting the structure of these objects. 

There has been a growing movement of developing statistical methods to understand populations of persistence diagrams.  This includes a significant body of work (for example see \cite{bubenik2010statistical, blumberg2014robust, chazal2013bootstrap, chazal2013optimal, fasy2014confidence}) which studies statistical methodology using the \emph{bottleneck distance}, which is effectively the $L^\infty$ distance within the space of persistence diagrams. 
%
%

There has also been a parallel movement of statistical analysis in TDA by converting persistence diagrams into other functional summaries. These new topological summaries lie in spaces where it is easier to modify traditional statistical methods such as computing means or performing t-tests. However, this can be at the expense of making it harder to provide topological interpretations of the results. Examples of such functional summaries includes persistence landscapes \cite{bubenik2013persistence} and persistent homology rank functions \cite{robins2015principal}.

Other related work investigates the homology or persistent homology of random topological objects (for example \cite{adler2014crackle, bobrowski2011distance, kahle2011random, kahle2013limit, yogeshwaran2014random, yogeshwaran2015topology}) including limit theorems and analysis of simulations.

\section{Geometry of the space of Persistence Diagrams}

\subsection{Background theory on persistent homology and persistence diagrams}

Here we will provide a very brief introduction to persistent homology. A more complete coverage can be found in \cite{edelsbrunner2010computational}. Although homology can be computed over any ring, to compute persistent homology we need a field. This is usually $\mathbb{F}_2$ for computational purposes. 

To define persistent homology, we start with a nested sequence of topological spaces,
\begin{align}\label{filteredspace}
X_0 \subseteq X_1 \subseteq X_2 \subseteq \ldots X_n =X.
\end{align}
Often this sequence arises from the sublevel sets of a function, $f \colon X \to \R$, with $X_i =
f^{-1}(-\infty,t_i]$ for a sequence $-\infty\leq t_0 \leq t_1 \leq \ldots \leq t_n\leq \infty$. 
The sequence induces linear maps on homology for any dimension r:
\[H_r(X_0) \to H_r(X_1) \to \ldots \to H_r(X_n).\]

We are interested in when homology classes appear and disappear in this sequence.
Let $\phi^j_i \colon H_r(X_i) \to H_r(X_j)$ be the linear map on homology induced by the inclusion $X_i\to X_j$. Observe that if $i<j<k$ then $\phi_j^k \circ \phi_i^j = \phi_i^k$. The homology
class $\gamma \in H_r(X_i)$ is said to be born at $X_i$ if it is not in the image of $\phi_{i-1}^i$. This same class is said to die
at $X_j$ if its image in $H_r(X_{j-1})$ is not in the image of $\phi_{i-1}^{j-1}$, but its image in $H_r(X_j)$ is in the image of $\phi_{i-1}^j$. In the case that the spaces arose from the level sets of a function $f$ as defined above, we define say that $\gamma$ is born at time $t_i$, dies at time $t_j$, and its lifetime is $t_j-t_i$.

A discrete description of the persistent homology of \eqref{filteredspace} is the multiset of points in the extended plane where we include $(t_i,t_j)$ with multiplicity the dimension of the set of homology classes born at $t_i$ and dying at $t_j$. This is the information recorded as the ($r$th dimensional) persistence diagram. 

Although we have here restricted our definition of persistent homology to that of a finite nested sequence, it can be defined more generally for nested parameterized families of topological spaces $\{X_t\colon t\in \R\}$ with the condition that $X_s\subseteq X_t$ whenever $s\leq t$ (alongside some technical finiteness conditions). We can think of this as a filtered space with filtration parameter $t$. For ``nice'' filtered spaces the persistence diagram can be defined analogously to the finite case. For details see \cite{crawley2015decomposition}.

It is worth observing that although we are computing homology we are often capturing geometric information through the filtration parameter. For example, if we are considering a filtration of $\R^2$ by the distance from a circle of radius $R$ then the $1$st dimensional persistent homology will have exactly one class born at $0$ (the time the circle first appears) and dying at $R$ (the time when the circle is filled in).

Before giving a technical definition of a persistence diagram we will need to introduce some notation. Let $\R^{2+}=\{(a,b)\in \R^2: a<b\}$. This is the part of the plane above the diagonal. Let $\Diag$ denote an abstract element representing the diagonal $\{(x,x)\colon x\in \R\}$.

%
%

Since we wish to also want to consider persistent homology classes of infinite duration we will also want to include lines at infinity. Let $\L_{-\infty}:=\{(-\infty, b)\colon b\in \R\}$ and $\L_{\infty}:=\{(a,\infty)\colon a\in \R\}$. 
Persistence diagrams will be multiset of points in $\L_\infty \cup \L_{-\infty}\cup \R^{2+}\cup \Delta$. For tractability we will impose some finiteness conditions, namely that only finitely many classes have infinite lifetimes and that the sum of all the finite lifetimes is finite. This restriction is not onerous in applications where generally we have finite sized data as input.

\begin{defn}
A \emph{persistence diagram} $X$ is a multiset of $\L_\infty \cup \L_{-\infty}\cup \R^{2+}\cup \Diag$ such that 
\begin{itemize}
\item The number of elements in $X|_{\L_{\infty}}$ and $X|_{\L_{-\infty}}$ are finite
\item $\sum_{(x_i,y_i)\in X|_{\R^2+}}(y_i-x_i)<\infty$ 
\item There are countably infinite copies of $\Diag$
\end{itemize}
\end{defn}

%

%
\subsection{Metrics on the space of persistence diagrams}\label{sec:metric}

In this paper we differ slightly from the historical definition of a persistence diagram. Instead of including every point along the diagonal of $\R^{2}$ with infinite multiplicity we include countably many copies of the diagonal. This alternative description still has the same distances between different persistence diagrams but simplifies statements, arguments and calculations. This is because we do not need to specify which point on the diagonal is being used.

Let $\D$ denote the space of all persistence diagrams. We will consider a family of metrics which are analogous to the $p$-Wasserstein distances on the space of probability measures and to the $L^p$ distances on the space of functions on a discrete set. $\R^{2+}$ inherits natural $L^p$ distances from $\R^2$. For $p\in [1,\infty)$ we have 
$\|(a_1, b_1) - (a_2, b_2)\|_p^p = |a_1-a_2|^p + |b_1-b_2|^p$ and 
$\|(a_1, b_1) -(a_2, b_2)\|_\infty = \max \{ |a_1-a_2|, |b_1, b_2|\}$.

Recall that $\Delta$ represents the diagonal in $\R^2$. With a slight abuse of notation we write $\|(a, b) - \Delta\|_p $ to denote the shortest $L^p$ distance from $(a,b)$ in to a point in the diagonal set in $\R^2$.  Thus 
\[\|(a, b) - \Delta\|_p = \inf_{t\in \R}\|(a,b)-(t,t)\|_p= 2^{\frac{1}{p}-1}|b-a|\] for $p<\infty$, and $\|(a, b)-\Delta\|_\infty=\inf_{t\in \R}\|(a,b)-(t,t)\|_\infty = |y-x|/2$. Both $ \L_{-\infty}$ and $\L_\infty$ inherit natural $L^p$ distances from the $L^p$ metric on $\R$; $\|(-\infty, b_1)-(-\infty, b_2)\|_p=|b_1-b_2|$ and $\|(a_1,\infty)-(a,\infty)\|_p=|a_1-a_2|$. We should also think of $\L_{-\infty}$, $\L_{-\infty}$ and $\R^{2+}\cup \Delta$ as three separate disjoint parts of a larger space.

Given  persistence diagrams $X$ and $Y$ we can consider all the bijections from the set of off diagonal points and copies of $\Delta$ in $X$, to the set of off diagonal points and copies of $\Delta$ in $Y$. This set is non-empty as it contains the bijection which matches everything to a copy of $\Delta$ in the other diagram. Each bijection provides a transport plan from $X$ to $Y$. Analogous to the definition of Wasserstein distances, we will define our family of metrics in terms of the cost of most efficient transport plan. 

For each $p \in [1, \infty)$ define \[d_p(X,Y) = \left( \inf_{\text{bijections }\phi:X \to Y } \sum_{x\in X} \|x-\phi(x)\|_p^p\right)^{1/p}.\]
 and $d_\infty(X,Y) = \inf_{\text{bijections }\phi:X \to Y} \sup_{x\in X} \|x-\phi(x)\|_\infty.$

These distances may be infinite - for example if $X$ and $Y$ contain a different number of points in $\mathcal{L}_\infty$ then $d_p(X,Y)=\infty$ for all $p$.

We will call a bijection between diagrams \emph{optimal for $d_p$} if it achieves the infimum in the definition of $d_p$ and this distance is finite. Figure \ref{fig:egbijection} illustrates an example of an optimal bijection.

\begin{figure}

\begin{center}
\begin{tikzpicture}[scale=.35]
\draw []   (-1,-1) -- (10,10);
\draw [->] (-1, 0) -- (10,0);
\draw [->] ( 0,-1) -- (0,10);

\draw[dashed] (2,4) -- (3,3);
\draw[dashed] (0,4) -- (1,5); 
\draw[dashed] (3,8) -- (2,8);
\draw[dashed] (6,7) -- (6.5,6.5);
\draw[dashed] (8,9) -- (8.5,8.5);

\draw plot[only marks, mark=square*,mark options={ mark size=4pt, fill=white}] coordinates{
(0,4) (2,4)(2,8)};

\draw plot[only marks, mark=triangle*,mark options={mark size=6pt, fill=white}] coordinates{
(1,5)(3,8) (6,7)(8,9)};

\end{tikzpicture}
\end{center}
\caption{The dashed lines indicate an optimal bijection from the persistence diagram containing the square points (and copies of $\Delta$) to the persistence diagram containing the triangle points (and copies of $\Delta$).}\label{fig:egbijection}
\end{figure}
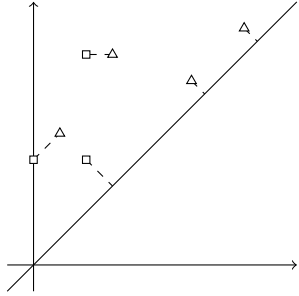

Given the same pair of diagrams but different values of $p$, different bijections may be optimal. Furthermore, optimal bijections for a given $p$ are not necessarily unique. For example, let $X$ and $Y$ to be diagrams containing pairs of opposite corners of a square located far from the diagonal. Because of symmetry, bijection the points vertically or horizontally involves the same cost. This example works for every $p\in [1,\infty]$. Other examples  of non-uniqueness can involve $\Delta$; it may be equally efficient to match two points to each other as it is to match both with a copy of $\Delta$.

In theory, for every pair $p,q\in [1,\infty]$ one could construct a distance function of the form 
\[ \inf_{\phi:X \to Y} \left(\sum_{x\in X} \|x-\phi(x)\|_q^p\right)^{1/p}\] with $p$ and $q$ different.  Some of the computational topology literature uses a family of metrics $d_{W_p}$ where $p$ varies but $q=\infty$ is fixed. 
The families $\{d_p\}$ and $\{d_{W_p}\}$ share many properties. The metrics $d_p$ and $d_{W_p}$ are bi-Lipschitz equivalent as
for any $x,y \in \R^2$ we have $\|x-y \|_\infty \leq \|x -y \|_p \leq 2 \|x - y \|_\infty$,
 implying $d_{W_p}(X,Y)\leq d_{p}(X,Y) \leq 2 d_{W_p}(X,Y)$. 
Any stability results for $\{d_p\}$ or $\{d_{W_p}\}$ would extend (with minor changes in constant) to stability results for the other.

We feel that the choice of setting $q=p$ is cleaner in theory and in practice. The coordinates of the points within a persistence diagram have particular meanings; one is the birth time and one is the death time. They are often infinitesimally independent (even though not globally so). For example, if we have generated our persistence diagram from the distance function to a point cloud then each persistence class has its birth and death time (infinitesimally) determined by the location of two pairs of points which are often distinct.  Whenever these pairs are distinct, moving any of these four points will change either the birth or the death but not both. The distinctness of the treatment of birth and death times as separate qualities may seem more philosophically pleasing to the reader in the setting of barcodes.

Another argument in favor of using $p=q$ is computational power. 
The mean and the median are defined to be the minimizers of cost functions involving $d_2$ and $d_1$ respectively. Computationally the mean and median have far nicer characterizations, with easy algorithms to find them, when they are defined using the metrics where $p=q$. This is discussed later in section \ref{sec:defmed}.

\subsection{Geometry of the space of persistence diagrams}

In this section we will describe the connected components of $\D$ and show $\D$ is a geodesic space, but first we need to introduce some notation. For each $p<\infty$ we can define the distance between finite multisets $A,B$ of points in $\L_\infty$ as
\[d^{\L_\infty}_p(A, B)^p =  \inf_{\text{bijections } \phi\colon A \to B}\sum_{a\in A} \|a-\phi(a)\|_p^p\] when $|A|=|B|$ and infinity otherwise. 
We also set \[d^{\L_\infty}_\infty(A, B) =  \inf_{\text{bijections } \phi\colon A \to B}\sup_{a\in A} \|a-\phi(a)\|_\infty\] whenever $|A|=|B|$ and infinity otherwise.  We define $d^{\L_{-\infty}}_p$ similarly.
Let $\D^{(k,l)}$ denote the space of persistence diagrams containing exactly $k$ points in $\L_{-\infty}$, and exactly $l$ points in $\L_{\infty}$.

\begin{lemma}\label{lem:splitdist}
Let $p\in [1,\infty]$. The connected components of $(\D, d_p)$ are $\D^{(k,l)}$, with $k, l$ non-negative integers.
Let $X,Y\in \D^{(k,l)}$. For $p<\infty$,
\begin{align*}
d_p(X,Y)^p= d_p(X|_{\R^{2+}\cup\Delta}, Y|_{\R^{2+}\cup\Delta})^p + d^{\L_{\infty}}_p( X|_{\L_\infty} ,Y|_{\L_\infty})^p+ d^{\L_{-\infty}}_p( X|_{\L_{-\infty}} ,Y|_{\L_{-\infty}})^p 
\end{align*}
and 
\[d_\infty(X,Y)=\max\{d_\infty(X|_{\R^{2+}\cup\Delta}, Y|_{\R^{2+}\cup\Delta}), d^{\L_{\infty}}_\infty( X|_{\L_\infty} ,Y|_{\L_\infty}), d^{\L_{-\infty}}_\infty( X|_{\L_{-\infty}} ,Y|_{\L_{-\infty}})\}.\] 

\end{lemma}

The proof of this lemma follows from the observations that the connected components of $\L_\infty \cup \L_{-\infty}\cup \R^{2+}\cup\Delta$ are $\L_\infty$, $\L_{-\infty}$ and $\R^{2+}\cup\Delta$, and that $d_p( X|_{\R^{2+}\cup\Delta}, Y|_{\R^{2+}\cup\Delta})$ is always finite.

Let $(\X,d)$ be a metric space. A curve $\lambda\colon [0,1]\to \X$ is called a \emph{geodesic} if there exists constant $C>0$ such that $d(\lambda(t_1), \lambda(t_2))=C|t_1-t_2|$ for all $t_1, t_2 \in [0,1]$ with $|t_1-t_2|$ sufficiently small. $(\X,d)$ is called a \emph{geodesic space} if for every $x,y\in \X$ there exists a geodesic $\lambda:[0,1] \to \X$ such that $\lambda(0)=x$ and $\lambda(1)=y$.

\begin{proposition}
$(\D,d_p)$ is a geodesic space for all $p \in [1,\infty]$.
\end{proposition}
\begin{proof}
Fix $p\in [1,\infty)$ and $X,Y \in \D$ with $d_p(X,Y)<\infty$. We want to find a bijection $\phi$ such that $ d_p(X,Y)^p=\sum_{x\in X} \|x-\phi(x)\|_p^p$ and from this bijection to construct a geodesic from $X$ to $Y$.

Let $\{\phi_i\}$ be a sequence of bijections such that 
$\lim_{i\to\infty}\sum_{x\in X} \|x-\phi_i(x)\|_p^p = d_p(X,Y)^p.$
Fix some  off diagonal    point $\hat{x} \in X$. The sequence $\{\phi_i(\hat{x})\}$ must have a convergent subsequence $\{\phi_{i_j}(\hat{x})\}$ which converges either to an off diagonal  point or to $\diag$ . This limit point must be in $Y$ and we set $\phi(\hat{x})$ to be this limit point.  Observe our subsequence also satisfies
$\lim_{j\to\infty}\sum_{x\in X} \|x-\phi_{i_j}(x)\|_p^p = d_p(X,Y)^p.$ 

We now replace our original sequence of bijections $\{\phi_i\}$ with the subsequence $\{\phi_{i_j}\}$. In this manner we can determine a choice $\phi(x)$ for each  off diagonal    point $x\in X$. Similarly we can determine $\phi^{-1}(y)$ for all the  off diagonal    points $y \in Y$. Since we are always considering subsequences of previous subsequences we have consistency in our choices.

Since there are only countably many points off the diagonal in the diagrams $X$ and $Y$ combined we can find an optimal bijection $\phi:X\to Y$ 
Let $X_t$ be the diagram with off diagonal points $\{(1-t)x+t\phi(x)\colon x\in X\}$ and set define the path $\lambda:[0,1] \to \D$ by $\lambda(t)=X_t$. By observation $X_0=X$, $X_1=Y$, and $\lambda$ is a geodesic.

The $p=\infty$ case is similar. Let  $\{\phi_i\}$  be a sequence of bijections such that $$\lim_{i\to\infty} \maxx_{x\in X} \|x-\phi_i(x)\|_\infty = d_\infty(X,Y)$$ and proceed as in the case $p \in [1,\infty)$ to produce a bijection $\phi$, by assigning the values of $\phi(x)$ and restricting to appropriate subsequences, such that $d_\infty(X,Y)=\maxx_{x\in X} \|x-\phi(x)\|_\infty$. This bijection will determine a geodesic by the same reasoning in the $p\in [1,\infty)$ case.
\end{proof}

\subsection{Curvature bounds on the space of persistence diagrams}

In order to understand the space of persistence diagrams it is useful to analyze its curvature. Alexandrov spaces are geodesic spaces with curvature bounds. They come in two different forms;  either their curvature is bounded from above (also known as $CAT$ spaces) or their curvature is bounded from below. A bound on curvature in geodesic space $(\X,d)$ is defined using comparison triangles. For each $\kappa\in \R$ there is a model space $M_\kappa$ with constant curvature $\kappa$. We compare triangles in $\X$ to triangles in $M_\kappa$. Take three points $x,y,z$. If $\kappa>0$ we require $d(x,y) + d(y,z) + d(z,x) \leq \sqrt{2\pi/\kappa}.$ These define a triangle $\Delta(x,y,z)$ in $\X$. We can build a comparison triangle $\Delta(\tilde{x},\tilde{y},\tilde{z})$ in the model space $M_\kappa$  whose sides have the same length as the sides of $\Delta(x,y,z)$. The curvature of $\X$ is bounded from below (above) by $\kappa$ if, for every triangle $\Delta(x,y,z)$ in $\X$, the distances between the points in $\Delta(x,y,z)$ are less than or equal (respectively greater than or equal) the corresponding points in the comparison triangle $\Delta(x',y',z')$ in $M_\kappa$. For more details see \cite{Kirk}.

A $CAT(k)$ space is a geodesic space whose curvature is bounded from above by $k$. $CAT$-spaces, in particular $CAT(0)$ spaces, have desirable properties. For example, the barycenter of any measure in a $CAT(0)$ space is unique and in a $CAT(k)$ space there is a  length $D_k$ such that 
balls of radius $D_k$ are contractible. We first confirm that $(\D,d_p)$ is not a $CAT$-space.

\begin{proposition}\label{notCAT}
For all $k>0$ and $p\in [1,\infty]$, $(\D, d_p)$ is not in $\mbox{CAT}(k)$.
\end{proposition}
\begin{proof}
If $(\D, d_p)$  is a $\mbox{CAT}(k)$ space then there is a constant $K>0$ such that for all pairs $X,Y\in (\D, d_p)$ with
$d_p(X,Y)^2<K$ there is a unique geodesic between them \cite{Kirk}. However, we can find pairs of diagrams
$X$ and $Y$ which are arbitrarily close such that there are two distinct geodesics between them. One example is by taking $X$ to be a diagram with two diagonally opposite corners of a square set far from the diagonal and $Y$ the diagram with the other two corners. This is illustrated in Figure \ref{fig:notcat0}. The horizontal and vertical paths are equally optimal and we may choose the square to be as small as we wish.
\end{proof}

\begin{figure}[hbt]
\begin{minipage}{0.45\linewidth}\centering

\begin{tikzpicture}[scale=.4]
\draw []   (-1,-1) -- (10,10);
\draw [->] (-1, 0) -- (10,0);
\draw [->] ( 0,-1) -- (0,10);

\draw[dashed] (1,6) -- (1,8);
\draw[dashed] (3,6) -- (3,8);

\draw plot[only marks, mark=square*,mark options={ mark size=4pt, fill=white}] coordinates{
(1,6) (3,8)};

\draw plot[only marks, mark=triangle*,mark options={mark size=6pt, fill=white}] coordinates{
(1,8)(3,6)};

\end{tikzpicture}
\end{minipage}
\begin{minipage}{0.45\linewidth}\centering
\begin{tikzpicture}[scale=.4]
\draw []   (-1,-1) -- (10,10);
\draw [->] (-1, 0) -- (10,0);
\draw [->] ( 0,-1) -- (0,10);

\draw[dashed] (1,6) -- (3,6);
\draw[dashed] (1,8) -- (3,8);

\draw plot[only marks, mark=square*,mark options={ mark size=4pt, fill=white}] coordinates{
(1,6) (3,8)};

\draw plot[only marks, mark=triangle*,mark options={mark size=6pt, fill=white}] coordinates{
(1,8)(3,6)};

\end{tikzpicture}
\end{minipage}
\caption{Two different optimal bijections between the triangle and the square diagrams.}\label{fig:notcat0}
\end{figure}
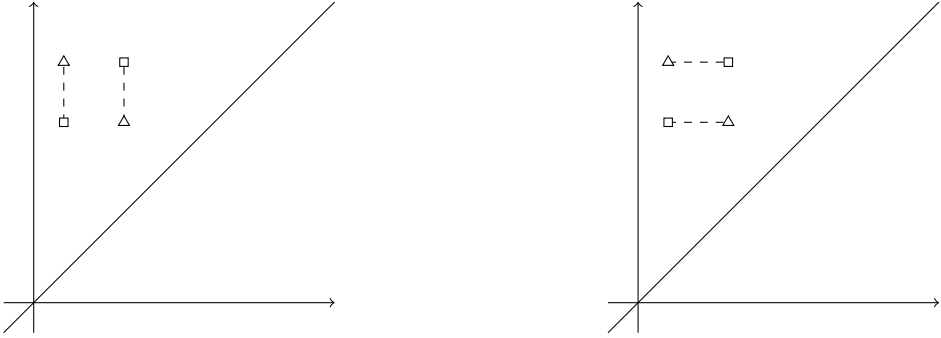

Alexandrov spaces have nice properties. For example, for Alexandrov spaces we can define tangent cones (analogous to tangent planes), and exponential maps. From \cite{Ohta}  we know that a geodesic space $(\X,d)$ is an Alexandrov space with 
curvature bounded from below by zero if, and only if, for every geodesic $\gamma:[0,1] \to \X$ from $X$
to $Y$, and every $Z \in \X$ we have 
\begin{align}\label{eq:alex}
d(Z,\gamma(t))^2 \geq  t d(Z,Y)^2 + (1-t) d(Z,X)^2  -t(1-t) d(X,Y)^2.
\end{align}
Using this characterization \cite{turner2014frechet} showed that $(\D, d_2)$ is an Alexandrov space with curvature bounded below by zero.

In contrast,  using different counterexamples for $p \in [1,2)$ and for $p \in (2, \infty]$ we can show that this curvature bound does not hold when $p\neq 2$.  These counterexamples are illustrated in Figure \ref{fig:counterex}.

 Let $p \in [1,2)$ and $t=1/2$. Let $X, Y$ and $Z$ be a persistence diagram with only one off diagonal point each in them at $x=(1,4), y=(1,6)$ and $z=(0,5)$ respectively. The midway point between $X$ and $Y$ (playing the role of $\gamma(1/2)$) is the diagram with the point $w=(1,5)$. This set up is shown in Figure \ref{fig:counter2inf}.
 We can calculate $d_p(Z,\gamma(1/2))^p= \|z-w\|_p^p =1$, $d_p(Z,X)^p=\|z-x\|_p^p =2$, $d_p(Z,Y)^p =\|z-y\|_p^p =2$ and $d_p(X,Y)^p=\|x-y\|_p^p=2^p$. Together they imply
 \[ \frac{1}{2} d_p(Z,Y)^2 + \frac{1}{2} d_p(Z,X)^2  -\frac{1}{4} d_p(X,Y)^2 =2^{2/p} -1.\]
 But $2^{2/p} -1>1= d_p(Z,\gamma(1/2))^2$ when $1\leq p<2$. This contradicts equation \eqref{eq:alex} and hence $(\D,d_p)$ is not an Alexandrov space with curvature bounded below by zero.

 Now let $p \in (2, \infty)$ and $t=1/2$. Let $X, Y$ and $Z$ be a persistence diagram with only one off diagonal point each in them at $x=(0,4), y=(2,6)$ and $z=(0,6)$ respectively. The midway point between $X$ and $Y$ (playing the role of $\gamma(1/2)$) is the diagram with the point $w=(1,5)$. This set up is shown in Figure \ref{fig:counter12}. Here
 $d_p(Z,\gamma(1/2))^p= \|z-w\|_p^p =2$,
 $d_p(Z,X)^p=\|z-x\|_p^p=2^p$,
 $d_p(Z,Y)^p =\|z-y\|_p^p =2^p$, and
 $d_p(X,Y)^p=\|x-y\|_p^p = 2^{p+1}$.

 %
 Together they imply
 \begin{align*}
  \frac{1}{2} d_p(Z,Y)^2 + \frac{1}{2} d_p(Z,X)^2  -\frac{1}{4} d_p(X,Y)^2 =2^2 -2^{2/p}
  \end{align*}
  But $ 2^2 -2^{2/p}>2^{2/p}= d_p(Z,\gamma(1/2))^2$
  when $p>2$. This contradicts equation \eqref{eq:alex} and hence $(\D,d_p)$ is not an Alexandrov space with curvature bounded below by zero.
  
  With this same set up we can calculate
  $d_\infty(Z,\gamma(1/2))= \|z-w\|_\infty = 1$,
  $d_\infty(Z,X)=\|z-x\|_\infty = 2$,
  $d_\infty(Z,Y) =\|z-y\|_\infty = 2$, and
  $d_\infty(X,Y)=\|x-y\|_\infty=2$. Hence
  %
  \begin{align*}
  \frac{1}{2} d_\infty(Z,Y)^2 + \frac{1}{2} d_\infty(Z,X)^2  -\frac{1}{4} d_\infty(X,Y)^2 &=3>1= d_\infty(Z,\gamma(1/2))^2.
  \end{align*}
  This contradicts equation \eqref{eq:alex} and hence $(\D,d_\infty)$ is not an Alexandrov space with curvature bounded below by zero.

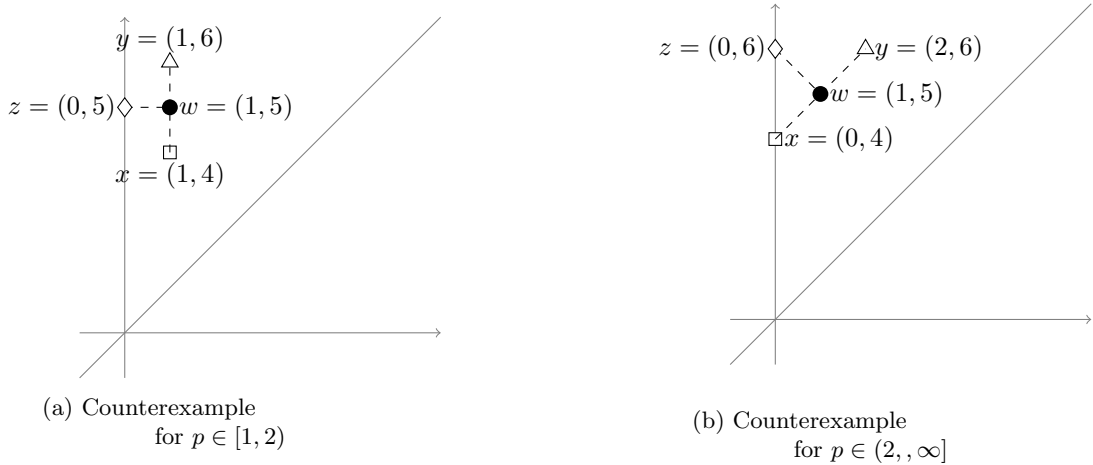
\begin{figure}[h]
\begin{minipage}{0.485\linewidth}\centering
\begin{center}
\begin{tikzpicture}[scale=.6]
\draw[gray, ->] (0,-1)--(0,7);
\draw[gray, ->](-1,0)--(7,0);
\draw[gray](-1,-1)--(7,7);

\draw plot[mark=square*,mark options={mark size=4pt, fill=white}] coordinates{
(1,4)} node [below]  {$x=(1,4)$};
\draw[dashed](1,4)--(1,5) ;
\draw[dashed](1,5)--(1,6);
\draw[dashed](0,5)--(1,5);

\draw plot[mark=triangle*,mark options={mark size=6pt, fill=white}] coordinates{
(1,6)} node [above]  {$y=(1,6)$};

\draw plot[mark=diamond*,mark options={mark size=6pt, fill=white}] coordinates{
(0,5)} node [left]  {$z=(0,5)$};

\fill[] (1, 5) circle (5pt) node[right] {$w=(1,5)$};

\end{tikzpicture}
\subcaption{Counterexample \newline for $p \in [1,2)$}\label{fig:counter2inf} 
\end{center}
\end{minipage}
\begin{minipage}{0.485\linewidth}\centering
\begin{center}
\begin{tikzpicture}[scale=.6]
\draw[gray, ->] (0,-1)--(0,7);
\draw[gray, ->](-1,0)--(7,0);
\draw[gray](-1,-1)--(7,7);

\draw plot[mark=triangle*,mark options={mark size=6pt, fill=white}]coordinates{
(2,6)} node [right]  {$y=(2,6)$};
\draw plot[mark=square*,mark options={mark size=4pt, fill=white}]coordinates{
(0,4)} node [right]  {$x=(0,4)$};

\draw[dashed](0,4)--(1,5) ;
\draw[dashed](1,5)--(2,6);
\draw[dashed](0,6)--(1,5);

\draw plot[mark=diamond*,mark options={mark size=6pt, fill=white}] coordinates{
(0,6)} node [left]  {$z=(0,6)$};

\fill (1, 5) circle (5pt)node[right]{$w=(1,5)$};

\end{tikzpicture}
\end{center}
\subcaption{Counterexample \newline for $p\in (2,,\infty]$}\label{fig:counter12}
\end{minipage}
\caption{In both (a) and (b) the geodesic $\gamma$ from $X=\{x\}$  to $Y=\{y\}$ has midpoint $\gamma(1/2)=\{w\}$. We consider the distances to $Z=\{z\}$. Here we are describing each persistence diagram by its list of off diagonal points. In both examples
$d_p(Z,\gamma(1/2))^2 <  \frac{1}{2} d_p(Z,Y)^2 + \frac{1}{2} d_p(Z,X)^2  -\frac{1}{4} d_p(X,Y)^2$ contradicting \eqref{eq:alex}. Thus $(\D,d_p)$ is not an Alexandrov space with curvature bounded below by zero when $p\in [1,2)\cup(2,\infty]$.\label{fig:counterex}}
\end{figure}

%

%

\section{The median of a population of persistence diagrams}\label{sec:defmed}

Measures of central tendency (such as the mean and the median), or their corresponding measures of variability or dispersion (the variance and the average cost respectively) are common statistics used to describe distributions. 
Central tendencies are solutions for optimizing different cost functions which are based on $p$-Wasserstein metrics. The median corresponds to the case where $p=1$.

%
%
%
%
%
%
The median of a set of real numbers $a_1, a_2, \ldots, a_N$, written in non-decreasing order, is the number $m$ which minimizes the mean absolute deviation function
$F_1^{\R}(x) = \frac{1}{N}\sum_{i=1}^N |a_i-x|.$
The average cost of moving a point in the sample data to the median is $F_1(m)$. For $N$ odd the median is unique and equals $a_{{N+1}/{2}}$.  If $N$ even then every number in the interval $[a_{N/{2}}, a_{(N+2)/{2}}]$ will minimize $F_1^{\R}$ and would be a valid choice as a median. To overcome this lack of uniqueness the general convention is to declare the median to be the midpoint of $[a_{{N}/{2}}, a_{(N+2)/{2}}]$.

More generally the median of a population $\{x_1, x_2, \ldots, x_N\}$ within a connected metric space $\X$ is the minimizer of the function $F_1(y)= \frac{1}{N}\sum_{i=1}^N d(x_i, y)$ where $d$ is an appropriate metric on $\X$.


%
%
%
To define the median of a population of persistence diagrams we need to fix a metric on the space of persistence diagrams. Unlike on the real line there are multiple reasonable options as explored in section \ref{sec:metric}. In this paper we argue that a well-motivated metric is $d_1$. This is for two main reasons. Firstly, the coordinates each have separate meanings and are infinitesimally independent. Thus for taking the median the birth should be the median of the relevant births, the deaths the median of the relevant deaths. Secondly, the computations become significantly easier. If we were to use $d_2$ from \ref{sec:metric} then would need to compute, at some stage, the geometric median of a set of points in the plane.  This is a problem as it was shown in \cite{bajaj1988algebraic} that in general there is no exact algorithm to find the geometric median of a set of $k$ points in the plane.

The \emph{median} is the persistence diagram $m$ which minimizes the cost function
$F_1(Y) = \frac{1}{N}\sum_{i=1}^N d_1(X_i, Y)$. 
The \emph{total cost} is $NF_1(m)$ and the  \emph{average cost} is $F_1(m)$. 

It is worth observing that the median is only defined for populations $\{X_1, \ldots, X_N\}$ that lie within the same connected component of $\D$ as otherwise for every $Y$ there is some $X_i$ such that $d_1(X_i,Y)=\infty$.

\begin{prop}
Fix $k,l$ non-negative integers. Let $X=\{X_1, X_2, \ldots, X_N\}$ be a population of persistence diagrams in $\D^{(k,l)}$. For each $i$ let $X_i^{\R^{2+}}$ denote  ${X_i}|_{\R^{2+}\cup \Delta}$, $X_i^\infty$ denote ${X_i}|_{\L_\infty}$ and  $X_i^{-\infty}$ denote ${X_i}|_{\L_-\infty}$. 

The diagram $Y$ is a median of $X$ if and only if $Y=Y^{\R^{2+}} \cup Y^\infty \cup Y^{-\infty}$ where $Y^{\R^{2+}}$ is a median of $\{X_1^{\R^{2+}}, X_2^{\R^{2+}}, \ldots, X_N^{\R^{2+}}\}$, $Y^\infty$ is a median of $\{X_1^{\infty}, X_2^{\infty}, \ldots, X_N^{\infty}\}$ and $Y^{-\infty}$ is a median of $\{X_1^{-\infty}, X_2^{-\infty}, \ldots, X_N^{-\infty}\}$
\end{prop}

\begin{proof}
Let $Z\in \D^{(k,l)}$. From Lemma \ref{lem:splitdist} we know that \[d_1(Z,X_i)=d_1(Z|_{\R^{2+}}, X_i^{\R^{2+}}) + d^{\L_{\infty}}_1( Z|_{\L_\infty} ,X_i^{\infty})+ d^{\L_{-\infty}}_1( Z|_{\L_{-\infty}} ,X_i^{-\infty})\] and hence we can write $F_1$ as a sum of three independent sums
\[F_1(Z)=\frac{1}{N}\sum_{i=1}^N d_1(Z|_{\R^{2+}}, X_i^{\R^{2+}})  + \frac{1}{N}\sum_{i=1}^N d^{\L_{\infty}}_1( Z|_{\L_\infty} ,X_i^{\infty}) + \frac{1}{N}\sum_{i=1}^N d^{\L_{-\infty}}_1( Z|_{\L_{-\infty}} ,X_i^{-\infty}).\]
If $Y$ is a median then it must minimize each of these sums.
\end{proof}

In the remainder of this section we will characterize the median of a population of multisets in $\L_\infty$ or in $\L_{-\infty}$. The next section will address the more complicated characterization of medians of populations in $\D^{(0,0)}$.

\begin{lemma}
Fix $N$ odd. Let $A_1, A_2, \ldots , A_N$ be each multisets of exactly $k$ real numbers. Label the elements of each $A_i$ so that $A_i = \{a_{i,1}, a_{i,2}, \ldots, a_{i,k}\}$ with $a_{i,1} \leq a_{i,2} \leq \ldots \leq a_{i,k}$. Set $B= \{b_1, b_2, \ldots, b_k\}$ where $b_j$  is the median of $\{a_{1,j}, \ldots a_{N,j}\}$. Then $B$ is the unique multiset of $k$ real numbers that minimizes
\[f_1:Y \mapsto \sum_{i=1}^N \left( \inf_{\phi:A_i \to Y, \phi \text{ bijection}} \sum_{a\in A_i} |a-\phi (a)|\right).\]
\end{lemma}
%
%
%
\begin{proof}
The key to this proof is that for $X=\{x_1, x_2, \ldots , x_k\}$ and $Y=\{y_1, y_2, \ldots , y_k\}$ (each written in non-decreasing order) we have \[\inf_{\phi:X \to Y, \text{ $\phi$ bijection}} \sum_{j=1}^m |x_i-\phi(x_i)| = \sum_{j=1}^m |x_j-y_j|.\] We are not claiming that $\phi\colon x_i\mapsto y_i$ is the unique optimal transport from $X$ to $Y$ (this is not always true) but just that it achieves this optimality.

Suppose the multiset $Y =\{y_1, y_2, \ldots, y_k\}$ (written in non-decreasing order) minimizes $f_1$.
The observation above implies 
\[f_1(Y) = \sum_{i=1}^N \sum_{j=1}^k  |a_{i,j} -y_j| = \sum_{j=1}^k\left(\sum_{i=1}^N  |a_{i,j} -y_j| \right).\]

For each $j$ let $b_j$ be the median of $\{a_{i,j}\}$. Since $N$ is odd, 
$\sum_{i=1}^N  |a_{i,j} -b_j| \leq \sum_{i=1}^N  |a_{i,j} -y_j|$
with equality if and only if $b_j=y_j$. Since $B$ minimizes $f_1$ we conclude that $y_j$ is the median of $\{a_{i,j}\}$
\end{proof}

If $N$ is even, then the median is not unique, even if we use the midpoint convention for populations of real values.
For example if $A_1=\{0, 2\}$ and $A_2=\{6, 12\}$, the two medians would be $\{\text{midpt}[0,6]=3,\text{midpt}[2,12]=8\}$ and $\{\text{midpt}[0,12]=6,\text{midpt}[2,6]=4\}$.
%
%
%
%
%
%
%
%

\subsection{The mean and median of multisets of points in the plane and copies of the diagonal}

We are splitting up our analysis into the different regions $\R^{2+}\cup \Delta$, $\L_\infty$ or $\L_{-\infty}$. This is because these are the disconnected components and all geodesics will keep the points in the persistence diagrams within these different regions separate. Thus in this section we will focus on the points in $\R^{2+}\cup \Delta$, that is those corresponding to persistent homology classes with finite lifetimes.

Before considering the problem of populations of multisets in $\R^{2+}\cup \Delta$ we will first investigate the simpler problem of populations of singletons in  $\R^{2+}\cup \Delta$. Here we are using the definition of median as the minimizer of the sum of $d_1$ distances. Given a population $S$ set $f_S:\R^{2+} \to \R$ by $f_S(z)=\frac{1}{N}\sum_{w\in S}||z-w||_1$. Thus $y$ is a median of $S$ implies it is a minimizer of $f_S$. 

Within the proposition the candidate minimiser is the point whose $x-$ and $y-$ coordinates are each the median of a sets of numbers constructed from the $x-$ and $y-$ coordinates of the points in $S$, and from copies of the diagonal. The idea is that whenever a point $(x,y)$ is matched with the diagonal it is effectively matching it with a point $(t,t)$ with $x\leq t\leq y$. For calculating a median of the $x$ coordinates with $|S|$ larger than the number of the copies of the diagonal, a contribution of $t\in [x,y]$ and a contribution of $\infty$ will have the same effect. Similarly, for calculating the median of the $y$-coordinates, a contribution of $t\in [x,y]$ will have the same effect as that of $-\infty$. This means that from the purposes of calculation, we can use $\pm \infty$ in our lists of coordinates in the proposition below.

\begin{proposition}\label{prop:median}

Fix $N$ odd and suppose $k>N/2$. Let $S=\{(a_1, b_1), (a_2, b_2), \ldots, (a_k, b_k)\}$ and $N-k$ copies of $\diag$ (where the $(a_i,b_i)\in  \R^{2+}$). 
Define $f=f_S|_{\R^{2+}}$, that is
\[f((x,y)) = \sum_{i=1}^k \|(x,y)-(a_i,b_i)\|_1+ \sum_{i=k+1}^N\|(x,y)-\diag\|_1.\]
Let $(\tilde{x},\tilde{y})$ be the point in $\R^2$  where $\tilde{x}$ is the median of $\{a_1, a_2 \ldots a_k\}$ with $N-k$ copies of $\infty$ and $\tilde{y}$ is the median of $\{b_1, b_2, \ldots , b_k\}$ with $N-k$ copies of $-\infty$. If $\tilde{x}< \tilde{y}$ then $(\tilde{x},\tilde{y})$ is the point in $ \R^{2+}$  which minimizes
$f$. 
If $\tilde{x}\geq \tilde{y}$ then $f((x,y))> \sum_{i=1}^k \|\diag-(a_i,b_i)\|_1$ for all $(x,y)\in \R^{2+}$.
\end{proposition}
\begin{proof}
Since $k>N/2$ we know that $\tilde{x}$ and $\tilde{y}$ are finite. Suppose that $\tilde{x}< \tilde{y}$. We want to show $(\tilde{x},\tilde{y})$ is the minimum of $f$. Since $f$ is a convex function over $\R^{2+}$ it is sufficient to show $(\tilde{x}, \tilde{y})$ is a local minimum. 

Consider pairs $(u,v)$ such that 
$|u| <\minn_{a_i\neq \tilde{x}}|\tilde{x}-a_i|$, $|v|<\minn_{b_i\neq \tilde{y}}|\tilde{y}-b_i|,$ and  $|u|+|v|<
\|(\tilde{x},\tilde{y})-\diag\|_1$. This is true for sufficiently small $u$ and $v$. For such $(u,v)$ we have
\begin{align*}
\sum_{i=1}^k &\|(\tilde{x}+u,\tilde{y} +v)-(a_i,b_i)\|_1-\sum_{i=1}^k \|(\tilde{x},\tilde{y})-(a_i,b_i)\|_1 \\
&=|\{i:a_i<\tilde{ x}\}| u + |\{i:a_i>\tilde{ x}\}| (-u) + |\{i:a_i=\tilde{x}\}| |u|\\
&\quad+ |\{i:b_i< \tilde{y}\}|v + |\{i:b_i> \tilde{y}\}| (-v) + |\{i:b_i=\tilde{y}\}| |v|
\end{align*}
and
\begin{align*}
\|(\tilde{x}+u,\tilde{y}+v)-\diag\|_1-\|(\tilde{x},\tilde{y})-\diag\|_1=((\tilde{y}+v)-(\tilde{x}+u))-(\tilde{y}-\tilde{x}) = v-u.
\end{align*}
Together these imply that 
\begin{align*}
f&((\tilde{x}+u,\tilde{y}+v)) - f((\tilde{x},\tilde{y}))\\
&= |\{i:a_i<\tilde{ x}\}| u + |\{i:a_i>\tilde{ x}\}| (-u) + |\{i:a_i=\tilde{x}\}| |u|+ (N-k)(-u)\\
&\quad+ |\{i:b_i< \tilde{y}\}| v + |\{i:b_i> \tilde{y}\}| (-v) + |\{i:b_i=\tilde{y}\}||v| + (N-k)v.
\end{align*}

Since $\tilde{x}$ is the median of  $\{a_1, a_2 \ldots a_k\}$ with $N-k$ copies of $\infty$  we know that  
\[\bigg|(|\{i:a_i>\tilde{ x}\}| + (N-k) ) -|\{i:a_i<\tilde{ x}\}| \bigg| < |\{i:a_i=\tilde{ x}\}|.\] 
This implies that if $u\neq 0$ then \[ |\{i:a_i<\tilde{ x}\}| u + |\{i:a_i>\tilde{ x}\}| (-u) + |\{i:a_i=\tilde{x}\}| |u|+ (N-k)(-u)>0.\] 

Similarly if $v\neq 0$ then \[ |\{i:b_i< \tilde{y}\}| v + |\{i:b_i> \tilde{y}\}| (-v) + |\{i:b_i=\tilde{y}\}| |v| + (N-k)v> 0.\]

Thus $f((\tilde{x}+u, \tilde{y}+v)) > f((\tilde{x},\tilde{y}))$ for $(\tilde{x}+u,\tilde{y} +v)$ sufficiently near, but not equal to, $(\tilde{x},\tilde{y})$, and that $(\tilde{x},\tilde{y})$ is a local minimum. The convexity of $f$ further implies that $(\tilde{x},\tilde{y})$ is the global minimum of $f$ over the domain $\R^{2+}$. Remember we are not including the diagonal as a candidate of locations for the minimum here as $f$ is a function over $\R^{2+}$.

Now suppose that  $(\tilde{x}, \tilde{y})$ lies on or below the diagonal. 
Let $(x,y)\in \R^{2+}$. Then either $x<\tilde{x}$ or $y>\tilde{y}$. Suppose that $x<\tilde{x}$. Let $x' \in (x,\tilde{x})$ with $(x',y) \in \R^{2+}$. Then 
\[f((x,y))-f((x',y))=\sum_{i=1}^k (|x-a_i|- |x'-a_i|) + \sum_{i=k+1}^N (x'-x)> 0\]
%
as $|x-a_i| -|x'-a_i| = x'-x$ whenever $a_i\geq \tilde{x}$ and \[|\{i:a_i\geq\tilde{x}\}|+(N-k) >|\{i:a_i<\tilde{x}\}|\] as $\tilde{x}$ is the median of the $a_i$ and $N-k$ copies of $\infty$. 

%

A similar argument shows that if $y>\tilde{y}$ and $y' \in (\tilde{y},y)$ with $(x,y') \in \R^{2+}$ then $f((x,y))>f((x,y'))$. Thus $f$ decreases as $(x,y)$ travels towards $(\tilde{x},\tilde{y})$ while staying within $\R^{2+}$. By considering limits there is some $t$ such that
$$f((x,y))> \sum_{i=1}^k\|(t,t)-(a_i,b_i)\|_1 +  (N-k)\|(t,t)-\diag\|_1.$$ The observation that $\|(t,t)-(a_i,b_i)\|_1\geq \|\diag-(a_i,b_i)\|_1$ for all $i$, $t$ completes the proof.
\end{proof}

\begin{lemma}\label{lemma:mostdiag}
If $k<N/2$ then
\[\sum_{i=1}^k \|(x,y)-(a_i,b_i)\|_1+ \sum_{i=k+1}^N\|(x,y)-\diag\|_1> \sum_{i=1}^k \|\diag-(a_i,b_i)\|_1\] for every point $(x,y)\in \R^{2+}$
\end{lemma}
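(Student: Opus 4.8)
The plan is to show that the left-hand side, call it $f((x,y))$ as in the proof of Proposition \ref{prop:median}, strictly exceeds the right-hand side, with the gap coming from the surplus of diagonal copies. First I would record two elementary facts valid for any $(x,y)\in\R^{2+}$ (so $x<y$): the $\ell_1$ distance from $(x,y)$ to the diagonal is $\|(x,y)-\diag\|_1=y-x$, and for each data point $\|\diag-(a_i,b_i)\|_1=|b_i-a_i|$. If $y=\infty$ the term $(N-k)(y-x)$ is already infinite (note $N-k>N/2>0$), so the inequality is trivial and we may assume $y$ finite. With these identities, the assertion to be proved reads
\[
\sum_{i=1}^k \|(x,y)-(a_i,b_i)\|_1 + (N-k)(y-x) \;>\; \sum_{i=1}^k |b_i-a_i|.
\]

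The key step is a per-point lower bound. For each $i$, writing $\|(x,y)-(a_i,b_i)\|_1 = |a_i-x|+|b_i-y|$ and applying the triangle inequality to the two summands,
\[
|a_i-x|+|b_i-y| \;\ge\; \bigl|(a_i-x)+(y-b_i)\bigr| \;=\; \bigl|(a_i-b_i)+(y-x)\bigr| \;\ge\; |b_i-a_i|-(y-x),
\]
where the last step uses $y-x\ge 0$. (Equivalently, one may estimate $\|(x,y)-(a_i,b_i)\|_1 \ge \|(a_i,b_i)-w\|_1 - \|(x,y)-w\|_1 \ge |b_i-a_i| - (y-x)$ using the diagonal point $w=(x,x)$ nearest to $(x,y)$.) Summing over $i=1,\dots,k$ gives $\sum_{i=1}^k\|(x,y)-(a_i,b_i)\|_1 \ge \sum_{i=1}^k|b_i-a_i| - k(y-x)$.

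Finally I would add the remaining $(N-k)(y-x)$ and collect terms:
\[
f((x,y)) \;\ge\; \sum_{i=1}^k|b_i-a_i| - k(y-x) + (N-k)(y-x) \;=\; \sum_{i=1}^k|b_i-a_i| + (N-2k)(y-x).
\]
Since $k<N/2$ we have $N-2k\ge 1>0$, and since $(x,y)$ lies strictly above the diagonal we have $y-x>0$; hence $(N-2k)(y-x)>0$ and the inequality is strict, which is exactly the claim.

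There is no serious obstacle here: the argument amounts to one application of the (reverse) triangle inequality together with bookkeeping. The only points needing a little care are getting the direction of the estimate right — we want a \emph{lower} bound for the distance from each $(a_i,b_i)$ to the common target $(x,y)$, so one bounds below by $|b_i-a_i|-(y-x)$ rather than above — and checking that the leftover $(N-2k)(y-x)$ is genuinely positive, which is precisely where both hypotheses, $k<N/2$ and $(x,y)\in\R^{2+}$, are used.
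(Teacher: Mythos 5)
Your argument is correct and is essentially the paper's own proof in rearranged form: the per-point bound $\|(x,y)-(a_i,b_i)\|_1 \ge \|\diag-(a_i,b_i)\|_1 - \|(x,y)-\diag\|_1$ is exactly the triangle inequality the paper invokes, and the strict surplus $(N-2k)(y-x)>0$ is the paper's observation that $\sum_{i=k+1}^N\|(x,y)-\diag\|_1 > \sum_{i=1}^k\|(x,y)-\diag\|_1$ when $k<N/2$. Your write-up is just more explicit about the bookkeeping (and about the degenerate case $y=\infty$), which the paper leaves implicit.
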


\begin{proof}
$\sum_{i=k+1}^N\|(x,y)-\diag\|_1> \sum_{i=1}^k\|(x,y)-\diag\|_1$ as $k<N/2$. Then use the triangle inequality; $\|(x,y)-(a_i,b_i)\|_1+\|(x,y)-\diag\|_1\geq \|\diag-(a_i,b_i)\|_1$.
\end{proof}

Using Proposition \ref{prop:median} and Lemma \ref{lemma:mostdiag} we can characterize the median of a odd sized population of points in $\R^{2+}\cup \Delta$.
\begin{corollary}\label{def:medianselection}
Let $N$ be odd and $S$ the population $\{(a_1, b_1),  \ldots, (a_k, b_k), \Delta, \ldots , \Delta\}$ containing $N-k$ copies of $\Delta$. Let $\tilde{x}$ be the median of $\{a_1, a_2 \ldots a_k\}$ with $N-k$ copies of $\infty$ and let $\tilde{y}$ be the median of $\{b_1, b_2, \ldots , b_k\}$ with $N-k$ copies of $-\infty$.
If $(\tilde{x}, \tilde{y})$ lies above the diagonal then the median of  $S$ is either $(\tilde{x},\tilde{y})$ or $\diag$ (depending on whether $f((\tilde{x},\tilde{y}))$ or $f(\diag)$ is smaller). If $(\tilde{x},\tilde{y})$ lies on or below the diagonal (or is $(\infty, -\infty)$ which philosophically lies below the diagonal) then $\diag$ is the median of $S$. 
\end{corollary}

\begin{figure}[hbt]
\begin{center}
\begin{tikzpicture}[scale=.5]
\draw[->] (0,-1)--(0,7);
\draw[->](-1,0)--(7,0);
\draw[](-1,-1)--(7,7);
\draw plot[mark=square*,mark options={mark size=4pt, fill=white}] coordinates{
(1,4)};
\draw plot[mark=triangle*,mark options={mark size=6pt, fill=white}] coordinates{
(2.4,5)};
\draw plot[mark=diamond*,mark options={mark size=6pt, fill=white}] coordinates{
(1.7, 3.6)};
\draw plot[mark=pentagon*, mark options={mark size=5pt, fill=white}] coordinates{
(0.5, 2)};
\draw (4, 4.5)  circle (4pt);
\draw[dashed] (1.7,-1)--(1.7,7);
\draw[dashed](-1,4)--(7,4);
\fill[](1.7,4) circle (5pt);
\end{tikzpicture}
\end{center}
\caption{The median of the square, triangle and diamond, circle and pentagon points. The order of the $x$ coordinates are $\{\text{pentagon, square, diamond, triangle, circle}\}$ and hence the median is that of the diamond. The order of the $y$ coordinates are $\{$pentagon, diamond, square, circle, triangle$\}$ and hence the $y$ coordinates of the median is that of the square.}
\end{figure}
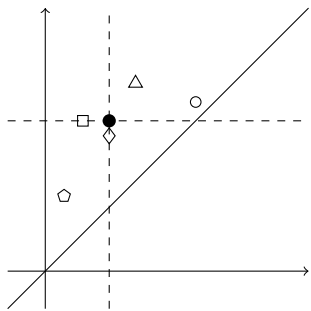

\begin{figure}
\begin{center}
\begin{tikzpicture}[scale=.5]
\draw[->] (0,-1)--(0,7);
\draw[->](-1,0)--(7,0);
\draw[](-1,-1)--(7,7);
\draw plot[mark=square*,mark options={mark size=4pt, fill=white}] coordinates{
(1,4)};
\draw plot[mark=triangle*,mark options={mark size=6pt, fill=white}] coordinates{
(2.4,5)};
\draw plot[mark=diamond*,mark options={mark size=6pt, fill=white}] coordinates{
(1.7, 3.6)};
\draw[dashed] (2.4,-1)--(2.4,7);
\draw[dashed](-1,3.6)--(7,3.6);
\fill[](2.4,3.6) circle (5pt);
\end{tikzpicture}\end{center}
\caption{The median of the square, triangle and diamond points alongside two copies of the diagonal. The order of the $x$ coordinates are $\{\text{square, diamond, triangle, }\infty, \infty\}$ and hence the median is that of the triangle. The order of the $y$ coordinates are $\{-\infty, -\infty$, diamond, square, triangle$\}$ and hence the $y$ coordinates of the median is that of the diamond.}
\end{figure}
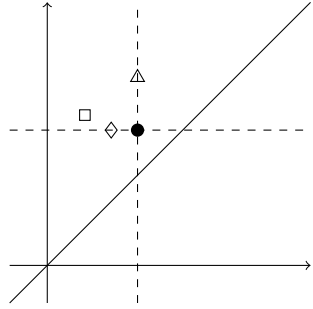

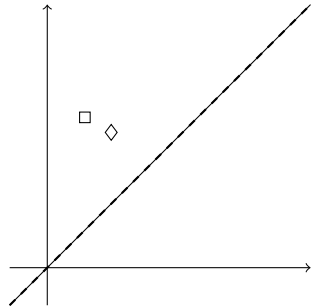
\begin{figure}
\begin{center}
\begin{tikzpicture}[scale=.5]
\draw[->] (0,-1)--(0,7);
\draw[->](-1,0)--(7,0);
\draw[](-1,-1)--(7,7);
\draw[thick,dashed](-1,-1)--(7,7);
\draw plot[mark=square*,mark options={mark size=4pt, fill=white}] coordinates{
(1,4)};
\draw plot[mark=diamond*,mark options={mark size=6pt, fill=white}] coordinates{
(1.7, 3.6)};
\end{tikzpicture}\end{center}
\caption{The median of the square and diamond points alongside three copies of the diagonal. The order of the $x$ coordinates are $\{\text{square, diamond, }\infty, \infty, \infty\}$ and hence the median is ``$\infty$''. The order of the $y$ coordinates are $\{-\infty, -\infty, -\infty$, diamond, square $\}$ and hence the median is $-\infty$. This implies that the median is a copy of the diagonal.}
\end{figure}

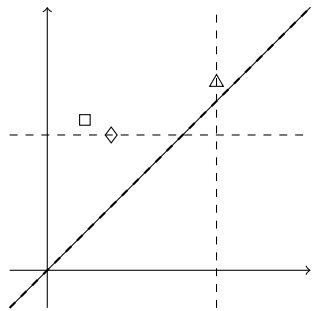
\begin{figure}
\begin{center}
\begin{tikzpicture}[scale=.5]
\draw[->] (0,-1)--(0,7);
\draw[->](-1,0)--(7,0);
\draw[](-1,-1)--(7,7);
\draw plot[mark=square*,mark options={mark size=4pt, fill=white}] coordinates{
(1,4)};
\draw plot[mark=triangle*,mark options={mark size=6pt, fill=white}] coordinates{
(4.5,5)};
\draw plot[mark=diamond*,mark options={mark size=6pt, fill=white}] coordinates{
(1.7, 3.6)};
\draw[thick,dashed](-1,-1)--(7,7);

\draw[dashed] (4.5,-1)--(4.5,7);
\draw[dashed](-1,3.6)--(7,3.6);
\end{tikzpicture}
\end{center}
\caption{The median of the square and diamond points alongside three copies of the diagonal. The order of the $x$ coordinates are $\{\text{square, diamond, triangle }\infty,  \infty\}$ and hence the candidate median has $x$ coordinate of the triangle . The order of the $y$ coordinates are $\{-\infty, -\infty,$ diamond, square, triangle $\}$ and hence the hence the candidate median has $y$ coordinate of the diamond.  However this candidate lies below the diagonal and hence the median is a copy of the diagonal.}
\end{figure}

The median of $S$ is at most two locations as the median of an odd number of (extended) real numbers is unique. In contrast, if $N$ is even then we would have the same uniqueness issues as for sets of real numbers. For example, given the set $X=\{2,3,4,10\}$ every point in the interval $[3,4]$ will minimise the sums distances of points to $X$. However, by convention, we generally say that $3.5$ is the median of $X$. When considering the median of an even number of points in $\R^{2+}\cup \Delta$, instead of a unique point in $\R^{2+}\cup\Delta$ we have a rectangle of options, in which every point would minimize $f_S$. We could in theory adopt an analogous convention by using the barycenter of this rectangle. This may be application dependent. However, for clarity of exposition, and to make the statements of theorems much cleaner, we will be restricting our attention to $N$ odd.

\subsection{Characterizing the median(s) of sets of diagrams} 
In \cite{turner2014frechet} there is a complete characterization of the local minima of $F_2$ when the observations are finitely many persistence diagrams each with only finitely many off diagonal points. However, the proof used the Alexandrov space structure of $(\D, d_2)$ and hence we cannot adapt it to characterize the local minima of $F_1$. Here we use an alternative approach to prove analogous necessary and sufficient conditions for a persistence diagram to be a local minimum of $F_1$.

This characterization of local minima of $F_2$ was rephrased in \cite{munch2015probabilistic} in terms of selections, groupings and optimal pairings. Here we will characterize the local minima of $F_1$ with this same terminology. Given a set of diagrams $X_1,\cdots,X_N$, a \textit{selection} is a choice of one point from each diagram (where that point can be $\diag$). A \textit{grouping} is a set of selections so that every  off diagonal   point of every diagram is part of exactly one selection. Our notation will be as follows. If $S$ is a selection then let $m_S$ be the median of that selection (chosen to be the off diagonal point if not unique). A grouping $G$ of $X_1, \ldots X_N$ is the set of selections $G = \{S_j\}$. Let $m(G)$ be the persistence diagram which contains $\{m_{S_j}:S_j\in G\}$. Each grouping $G$ produces a candidate $m(G)$ for the median. We will show that any median of $X_1, \ldots X_N$ must be $m(G)$ for some grouping $G$ of $X_1, \ldots X_N$.

We will consider two groupings as equivalent if they only differ by selections containing only copies of the diagonal. Note that equivalent groupings produce the same persistence diagram as their median candidate. This implies that in theorems and algorithms we can restrict to groupings where each selection contains at least one off-diagonal point.

Since any minimum is also a local minimum, we will focus now on characterizing the local minima of $F_1$. We will first show that if $Y$ is the median of $\{X_1, \ldots, X_N \}$ then $Y=m(G)$ whenever $G$ is an appropriate grouping constructed using optimal bijections $\phi_i:Y\to X_i$.

\begin{theorem}\label{thm:median}
Let $X_1, \ldots, X_N$ be persistence diagrams in $\D^{(0,0)}$. Let $Y=\{y_j\}\in \D^{(0,0)}$. For each $i$ let $\phi_i:Y \to X_i$ be an optimal bijection between $Y$ and $X_i$ using the distance function $d_1$. For each $y\in Y$ we have a selection $\{\phi_i(y)\}$ (to make this well defined we think of the copies of $\Delta$ when $\phi_i^{-1}(x_j)=\Delta$ as each disjoint). Let $G$ be the grouping $\{ \{\phi_i(y)\}\colon y\in Y\}$. 

If $Y$ is a local minimum of $F_1:Z \mapsto \frac1N \sum_{i=1}^N d_1(X_i,Z)$ then $Y=m(G)$.
\end{theorem}

\begin{proof}
Suppose that $Y\neq m(G)$ and thus $y\neq m_{\{\phi_i(y)\}}$ for some $y\in Y$. We need to split into cases depending on whether or not $m_{\{\phi_i(y)\}}$ is the diagonal.
%

If $y=\diag$ then  $\{\phi_i(y)\}$ contains at most one off diagonal point. By Lemma \ref{lemma:mostdiag} we know that $m_{\{\phi_i(y)\}}=\diag$.

Suppose now that $y\neq \diag$ and that more that half of $\{\phi_i(y)\}$ are copies of the diagonal. As $z$ moves from $y$ to the closest point on the diagonal $\sum_{\{i:\phi_i(y)\neq \diag\}} \|z-\phi_i(y)\|_1$ increases less than $\sum_{\{i:\phi_i(y)=\diag\}} \|z-\diag\|_1$ decreases and hence $\sum_i \|z-\phi_i(y)\|_1$ must be decreasing. Let $Z= \{z\}\cup Y\backslash y$. $F_1(Z)$ decreases as $z$ moves from $y$ towards the diagonal. Thus $Y$ cannot be a local minimum. 

Finally suppose that $y\neq \diag$ and more than half the points of $\{\phi_i(y)\}$  are off the diagonal. Consider the point  $(\tilde{x},\tilde{y})\in \R^2$ introduced in Proposition \ref{prop:median}. If $(\tilde{x},\tilde{y})$ lies above the diagonal then by Proposition \ref{prop:median} we know that $\sum_i \|z-\phi_i(y)\|_1$ decreases as $z$ travels along a straight line from $y$ to $m_{\{\phi_i(y)\}}$. 

If $(\tilde{x}, \tilde{y})$ lies on or below the diagonal then the proof of Proposition 
\ref{prop:median} shows that $\sum_i \|z-\phi_i(y)\|_1$ decreases as $z$ moves from $y$ to $\diag=m_{\{\phi_i(y)\}}$. In both cases this implies that $F_1$ would also be decreasing as $z$ travels from $y$ towards $m_{\{\phi_i(y)\}}$. Having now exhausted all the possibilities we conclude $Y$ is not a local minimum.
\end{proof}

In the above theorem we made no assumption about the uniqueness of the optimal bijections $\phi_i:Y\to X_i$. Instead this necessary condition holds for \emph{any} set of optimal bijections. This is slightly different to the scenario of the mean. In \cite{turner2014frechet} it was shown that if $Y$ was a local minimum of $F_2$ the $\phi_i$ were essentially unique (up to relabelling points in the persistent diagrams at the same location in $\R^{2+}$. However this uniqueness does not hold for the local minima of $F_1$. This is because shifting an observation $a_i$ within a population $\{a_1, \ldots, a_N\}$ of real numbers does not affect the median unless $\sgn(a_i-median\{a_1, \ldots, a_N\})$ changes. Figure \ref{fig:notuniquebijections} provides an explicit example.

\begin{center}
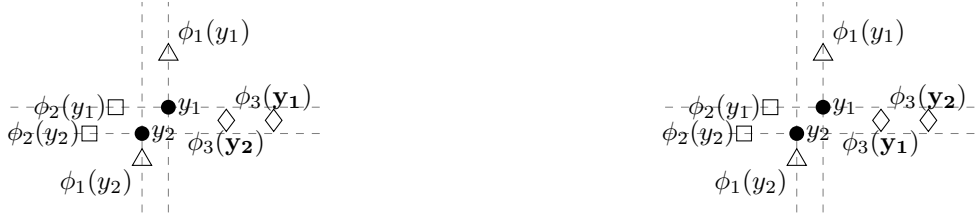
\begin{figure}[hbt]
\begin{minipage}{0.455\linewidth}\centering
\begin{tikzpicture}[scale=0.7]

\draw plot[mark=square*,mark options={mark size=4pt, fill=white}] coordinates{(0,5)}node[left]{$\phi_2(y_1)$};
\draw plot[mark=square*,mark options={mark size=4pt, fill=white}] coordinates{(-0.5,4.5)}node[ left]{$\phi_2(y_2)$};

\draw plot[mark=triangle*,mark options={mark size=6pt, fill=white}] coordinates{
(0.5,4)} node[below left]{$\phi_1(y_2)$};
\draw plot[mark=triangle*,mark options={mark size=6pt, fill=white}] coordinates{
(1,6)} node[above right]{$\phi_1(y_1)$};

\draw plot[mark=diamond*,mark options={mark size=6pt, fill=white}] coordinates{
(3,4.75)}node[above]{$\phi_3(\mathbf{y_1})$};
\draw plot[mark=diamond*,mark options={mark size=6pt, fill=white}] coordinates{
(2.1,4.75)}node[below]{$\phi_3(\mathbf{y_2})$} ;


\draw[dashed, opacity=0.5] (1,3)--(1,7);
\draw[dashed, opacity=0.5](-2,5)--(4,5);
\fill[](1,5)node[right]{$y_1$} circle (4pt);

\draw[dashed, opacity=0.5] (0.5,3)--(0.5,7);
\draw[dashed,opacity=0.5](-2,4.5)--(4,4.5);
\fill[](0.5, 4.5)node[right]{$y_2$}  circle (4pt);
\end{tikzpicture}
\subcaption{A grouping where the $y_2$ is the median of the selection containing the diamond on the \emph{left}.}
\end{minipage}%
\qquad 
\begin{minipage}{0.455\linewidth}\centering
\begin{tikzpicture}[scale=0.7]

\draw plot[mark=square*,mark options={mark size=4pt, fill=white}] coordinates{(0,5)}node[left]{$\phi_2(y_1)$};
\draw plot[mark=square*,mark options={mark size=4pt, fill=white}] coordinates{(-0.5,4.5)}node[ left]{$\phi_2(y_2)$};

\draw plot[mark=triangle*,mark options={mark size=6pt, fill=white}] coordinates{
(0.5,4)} node[below left]{$\phi_1(y_2)$};
\draw plot[mark=triangle*,mark options={mark size=6pt, fill=white}] coordinates{
(1,6)} node[above right]{$\phi_1(y_1)$};

\draw plot[mark=diamond*,mark options={mark size=6pt, fill=white}] coordinates{
(3,4.75)}node[above]{$\phi_3(\mathbf{y_2})$};
\draw plot[mark=diamond*,mark options={mark size=6pt, fill=white}] coordinates{
(2.1,4.75)}node[below]{$\phi_3(\mathbf{y_1})$};


\draw[dashed, opacity=0.5] (1,3)--(1,7);
\draw[dashed, opacity=0.5](-2,5)--(4,5);
\fill[](1,5)node[right]{$y_1$} circle (4pt);

\draw[dashed, opacity=0.5] (0.5,3)--(0.5,7);
\draw[dashed,opacity=0.5](-2,4.5)--(4,4.5);

\fill[](0.5, 4.5)node[right]{$y_2$}  circle (4pt);
\end{tikzpicture}
\subcaption{A grouping where the $y_2$ is the median of the selection containing the diamond on the \emph{right}.}
\end{minipage}%

%
%
%
%
\caption{$Y$ (black) is a local minimum of $F_1$. It is the unique median of the ``triangle'', ``square'' and ``diamond'' diagrams. There are two optimal bijections $\phi_3$ from $Y$ to the ``diamond'' diagram, creating two groupings $G_1$ and $G_2$. Here $Y=m(G_1)=m(G_2)$.}\label{fig:notuniquebijections}
\end{figure}
\end{center}

The following is a sufficient condition for a persistence diagram to be a local minimum of the function $F_1:Z \mapsto \frac1N \sum_{i=1}^N d_1(X_i,Z)$ when we restrict to input diagrams $X_i$ containing only finitely many off diagonal points.

\begin{theorem}
Let $X_1, \ldots, X_N\in \D^{(0,0)}$ be persistence diagrams with only finitely many  off diagonal  points. Let $Y=\{y_j\}\in \D$. Suppose that whenever $\phi_i:Y \to X_i$ are optimal bijections 
\begin{enumerate}
\item $y=m_{\{\phi_i(y)\}}$ whenever $y\in Y$ is off diagonal, and 
\item for any selection $S=\{x_1, x_2, \ldots x_N\}$ such that $x_i \in X_i$ and $\phi_i^{-1}(x_i)=\Delta$, we have $m_S=\Delta$.
\end{enumerate}
Then $Y$ is a local minimum of $F_1:Z \mapsto \frac1N \sum_{i=1}^N d_1(X_i,Z)$
%
\end{theorem}
\begin{proof}

Let $\phi_i:Y \to X_i$ be optimal bijections. Assume that $y=m_{\{\phi_i(y)\}}$ whenever $y\in Y$ is off diagonal. Since each of the $X_i$ contain only finitely many off diagonal points there can only be finitely many selections of the $\{X_1, X_2, \ldots X_N\}$ containing some off diagonal point. Thus there can only be finitely many off diagonal points in $Y$.

Suppose that $Y$ is not a local minimum. Then there exists a sequence $Y_n$ that converges to $Y$ such that $F_1(Y_n)<Y$ for all $n$. For each $Y_n$ fix optimal bijections $\psi_n:Y \to Y_n$. Fix an off diagonal point $y\in Y$. Since $\|y-\Delta\|>0$ and $d_1(Y, Y_n) \to 0$ we know $\psi_n(y) \neq \Delta$ for large enough $n$. 

For each $i$ choose optimal bijections $\phi_i^n:Y_n \to X_i$. Consider the sequence $(\phi_i^n \circ \psi_n)(y) \in X_i$. Since $X_i$ has only finitely many off diagonal points this sequence has a constant subsequence (here we think of the sequence containing only copies of the diagonal as constant).
By taking subsequences of subsequences we can find a subsequence $\hat{Y_l}$ of $Y_n$ such that such that $(\phi_i^l\circ \psi_l)(y)$ is constant for all off diagonal $y\in Y$ and all $i$. 

Construct $\beta_i\colon Y\to X_i$ by $\beta_i(y)=\phi_i^l\circ \psi_l(y)$ and $\beta_i(x)=\diag$ for any remaining unmatched points $x\in X_i$ We will show these $\beta_i\colon Y \to X_i$ are optimal bijections. 

For each bijection $\tau:A \to B$ let $\C(\tau)=\sum_{a\in A}\|a-\tau(a)\|_1$ denote the 1-Wasserstein transportation cost via the bijection $\tau$. Thus $\tau:A\to B$ is an optimal bijection if and only if $d_1(A,B) =\C(\tau)$.

Suppose that $\beta_i:Y \to X_i$ is not optimal. This implies there is some bijection $\alpha:Y \to X_i$ and $\epsilon>0$ with 
the $\C(\alpha)< \C(\beta_i) -\epsilon.$ 

Since $\lim_{l\to \infty}\hat{Y_l} =Y$ there is some $l$ and some bijection $\psi_l:\hat{Y_l} \to Y$ such that $\C(\psi_l)< \epsilon/3$. Let $\hat{\beta_i}:\hat{Y_l} \to X_i$ be the  transportation plan by first transporting $Y_l$ via $\psi_l$ to $Y$ and then transporting via $\alpha$ to $X_i$. By contruction
$\C({\hat{\beta_i}})\leq \C(\alpha) + \C(\psi_l) <\C(\beta_i) -2\epsilon/3.$
But at the same time, by considering $\beta_i$ as the composition of the transportation plans of $\hat{\beta_i}$ and $\psi_l$, we know
$\C(\beta_i)\leq \C(\hat{\beta_i}) + \C(\psi_l) < \C(\hat{\beta_i}) + \epsilon/3.$

Together these inequalities imply that $\C(\beta_i) < \C(\beta_i) -\epsilon/3$ which is impossible. Thus the $\beta_i=\phi_i^l\circ \psi_l\colon Y \to X_i$ are optimal bijections for all $i$.

Now 
$F_1(Y)= \frac1N\sum_{i=1}^N \left(\sum_{\{y\in Y\colon y\neq \Delta\}} \|y-\beta_i(y)\|_1 +  \sum_{\{x\in X_i\colon \beta_i^{-1}(x)=\Delta\}} \|x-\Delta\|_1\right)$
 and 
 \begin{align*}
F_1(Y_l)&= \frac1N\sum_{i=1}^N \left(\sum_{\{\hat{y}\in Y\colon \psi_l(\hat{y})\neq \Delta\}} \|\psi_l(\hat{y})-\phi_i^l(y)\|_1 + \sum_{\{\hat{y}\in Y_l\colon \psi_l(\hat{y})=\Delta\}} \|\hat{y} -\phi_i^l(\hat{y})\|_1\right)\\
&= \frac1N\sum_{i=1}^N \left( \sum_{\{y\in Y\colon y\neq \Delta\}} \|\psi_l^{-1}(y)-\beta_i(y)\|_1 + \sum_{\{\hat{y}\in Y_l\colon \psi_l(\hat{y})=\Delta\}} \|\hat{y} -\phi_i^l(\hat{y})\|_1\right).
\end{align*}

By assumption $y=m_{\{\beta_i(y)\}}$ and hence \[\sum_{i=1}^N \sum_{\{y\in Y\colon y\neq \Delta\}} \|y-\beta_i(y)\|_1 \leq\sum_{i=1}^N  \sum_{\{y\in Y\colon y\neq \Delta\}} \|\psi_l^{-1}(y)-\beta_i(y)\|_1.\] Thus for $F_1(Y_l)<F_1(Y)$ to hold it must be true that  \[ \sum_{i=1}^N  \sum_{\{x\in X_i\colon \beta_i^{-1}(x)=\Delta\}} \|x-\Delta\|_1>  \sum_{i=1}^N \sum_{\{\hat{y}\in Y_l\colon \psi_l(\hat{y})=\Delta\}} \|\hat{y} -\phi_i^l(\hat{y})\|_1.\]
Since $\{\hat{y}\in Y_l\colon \psi_l(\hat{y})=\Delta\} \subset \{\hat{y}\in Y_l\colon (\beta_i^{-1}\circ \phi_i^l)(\hat{y})=\Delta\}$ we further know
\[ \sum_{i=1}^N  \sum_{\{x\in X_i\colon \beta_i^{-1}(x)=\Delta\}} \|x-\Delta\|_1>  \sum_{i=1}^N  \sum_{\{x\in X_i\colon \beta_i^{-1}(x)=\Delta\}} \|x-(\phi_i^l)^{-1}(x)\|_1.\]
This implies that there is a selection $S=\{x_1, x_2, \ldots x_N\}$ such that $x_i \in X_i$, $\beta_i^{-1}(x_i)=\Delta$ and $m_S\neq\Delta$, contradicting our second condition.
\end{proof}

Theorem \ref{thm:median} provides us with an (admittedly very slow) algorithm to find the median. We can consider the set of all groupings $G$ up to equivalence, and their corresponding candidates $m(G)$. The median is one of these $m(G)$ so we only need to compare the $F_1(m(G))$ over all groupings $G$. We only have to use the necessary condition as it is always allowable to check (finitely many) extra options when looking for global minima as long as we know we have all the local minima in our list to check.

Alternatively we could use a gradient descent algorithm analogous to that used in \cite{turner2014frechet}. The only modifications needed are replacing the optimal pairing using $d_2$ with optimal pairings using $d_1$ and replacing the means of the selections with the medians of the selections. This algorithm will terminate in finite time as at each iteration the cost function $F_1$ decreases and uses a new grouping (of which there are only finitely many).  This would not guarantee finding the global minimum  but rather will terminate at a local minimum. Running multiple times from different initial locations can improve the estimate. 

\section{Comparing the median and the mean}

\subsection{Robustness of the median}


For real numbers, the median is a robust measure of central tendency, while the mean is not.  One measure of robustness is the breakdown point, which is of a bound on the proportion of incorrect observations (e.g. arbitrarily large observations) that an estimator can handle before giving an incorrect (e.g. arbitrarily large) result. The median has a breakdown point of $50\%$, while the mean has a breakdown point of $0\%$ (a single large observation can throw it off). In this section we will prove that the breakdown points for medians and mean of persistence diagrams are the same as those of real numbers. For persistence diagrams we replace ``arbitrarily large'' with ``arbitrarily far finite distance away.'' 

\begin{lemma}
The breakdown point for the mean of a population of persistence diagrams lying in $\D^{(0,0)}$ is $0\%$.
\end{lemma}
\begin{proof}
Let $X_1, X_2, \ldots X_N\in\D^{(0,0)}$ with mean $Y$. There is some $M>0$ such that every point in $Y$ is at most distance $M$ from the diagonal. Let $\tilde{X}$ be a diagram with a single off diagonal point $p=(0, \sqrt{2}(KN+ MN))$. Observe that $p$ is distance  $KN+MN$ from the diagonal.

Let $Z$ be a mean of $\{\tilde{X}, X_2, \ldots X_N\}$. Using the characterisation of the mean, $Z$ must contain a point at least distance $(KN+MN)/N=K+M$ from the diagonal. This implies that $d_2(Z,Y)\geq K$. By choosing $K$ arbitrarily large we can ensure $Y$ and $Z$ are arbitrarily far away. 
\end{proof}

The breakdown point for the median of a population of persistence diagrams lying in $\D^{(0,0)}$ is $50\%$. Let $\emptyset$ denote the persistence diagram only containing copies of the diagonal.

\begin{theorem}
Let $X_1, X_2, \ldots X_{n+1}\in \D^{(0,0)}$ each with only finitely many  off diagonal points. Then exists a constant $M$ (dependent on $X_1, X_2, \ldots X_n$) such that for any $X_{n+2}, \ldots X_{2n+1} \in \D^{(0,0)}$, any median of $\{X_1, X_2, \ldots X_{n+1}, X_{n+2}, \ldots X_{2n+1}\}$ is of distance at most $M$ from the persistence diagram only containing copies of the diagonal.
\end{theorem}
\begin{proof}
Set our bound $M$ as
\[\max_{\text{ Groupings }G\text{ of } X_1,X_2,\ldots X_{n+1}}\sum_{\text{selections }s\in G} \max\{y\text{-coords in }s\} -  \min\{x\text{-coords in }s\} \]
which only depends on the diagrams $\{X_1, X_2, \ldots X_{n+1}\}$.

Let $Y=\{(a_j, b_j)\}$ be a median of $\{X_1, X_2, \ldots X_{n+1}, X_{n+2}, \ldots X_{2n+1}\}$. From optimal bijections $\phi_i:Y \to X_i$ denote the coordinates of $\phi_i((a_j, b_j))$ as $(x_{i,j}, y_{i,j})$ (writing $(\infty, -\infty)$ when $\phi_i((a_j,b_j))=\Delta$). Since $a_j$ is the median of $\{x_{1,j}, x_{2,j}, \ldots x_{2n+1, j}\}$ we know that 
$a_j \geq \min \{x_{1,j}, x_{2,j}, \ldots x_{n+1, j}\}.$
Similarly $b_j \leq \max \{y_{1,j}, y_{2,j}, \ldots y_{n+1, j}\}.$
This implies that  for each $j$, we have the bound 
$b_j-a_j\leq \max \{y_{1,j}, y_{2,j}, \ldots y_{n+1, j}\} - \min \{x_{1,j}, x_{2,j}, \ldots x_{n+1, j}\}.$
Thus 
\[d_1(Y, \emptyset)=\sum_{j} b_j-a_j \leq \sum_j  \max \{y_{1,j}, y_{2,j}, \ldots y_{n+1, j}\} - \min \{x_{1,j}, x_{2,j}, \ldots x_{n+1, j}\}.\]

From our characterisation of the median of a set of persistence diagrams we know $Y= m(\hat{G})$ for some grouping $\hat{G}$ of $\{X_1, X_2, \ldots X_{2n+1}\}$. Let $\hat{G}^{res}$ be the restriction of $\hat{G}$ to the subset of diagrams $\{X_1, X_2, \ldots X_{n+1}\}$.

By construction 
\begin{align*}
M&\geq \sum_{\text{selections }s\in \hat{G}^{res}} \max\{y\text{-coordinates in }s\} -  \min\{x\text{-coordinates in }s\}\\
&=\sum_j  \max \{y_{1,j}, y_{2,j}, \ldots y_{n+1, j}\} - \min \{x_{1,j}, x_{2,j}, \ldots x_{n+1, j}\}\\
&\geq d_1(Y, \emptyset)
\end{align*}

\end{proof}

\subsection{Number of points in the mean compared to the median}\label{sec:number points}

One qualitative difference between the mean and the median is the presence or absence of points with small persistence. In some applications, such as when we have point cloud samples of an underlying shape of interest, these are heuristically the result of noise. The mean of a selection containing at least one point off the diagonal is a point off the diagonal.  It is possible for the mean of $N$ diagrams each with $K$ points to contain $NK$ off diagonal points. In comparison the median of any selection with more than half copies of the diagonal will always be a copy of the diagonal.  In the big picture this can add up to lots of extra points off the diagonal in the mean persistence diagram when compared to the median.

\begin{lemma}
Let $X_1, \ldots X_N$ be persistence diagrams such that the average number of off diagonal points in the $X_i$ is $K$. If $Y$ is a median of the $X_i$ then $Y$ has less than $2K$ points off the diagonal.
\end{lemma}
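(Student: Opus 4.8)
The plan is to combine the median characterization of Theorem~\ref{thm:median} with the ``more than half of the selection must be off-diagonal'' phenomenon that is implicit in Lemma~\ref{lemma:mostdiag}, and then to finish with a pigeonhole count. I expect the whole argument to be short; there is no single hard step, only two small points of care noted at the end.

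First I would observe that a median is by definition a global minimizer of $F_1$, hence in particular a local minimum, so Theorem~\ref{thm:median} applies with \emph{any} choice of optimal bijections. So fix optimal bijections $\phi_i : Y \to X_i$ and let $G = \{\{\phi_i(y)\} : y \in Y\}$ be the associated matching; then $Y = m_G$. In particular, for every off-diagonal point $y \in Y$ we have $y = m_{\{\phi_i(y)\}}$ with $y \neq \diag$. Next I would invoke Lemma~\ref{lemma:mostdiag} in contrapositive form: if strictly fewer than $N/2$ of the entries of a selection are off the diagonal, then the median of that selection is $\diag$. Since $m_{\{\phi_i(y)\}} = y \neq \diag$, the selection $\{\phi_i(y)\}$ must therefore have more than $N/2$ of its entries lying off the diagonal; because $N$ is odd, this means at least $(N+1)/2$ of the points $\phi_1(y),\dots,\phi_N(y)$ are genuine off-diagonal points of the respective diagrams $X_i$.

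Then I would do the count. Since $G$ is a \emph{matching}, every off-diagonal point of every $X_i$ lies in exactly one selection, so the off-diagonal points appearing in the selections $\{\phi_i(y)\}$ attached to distinct $y \in Y$ are disjoint. Writing $n_Y$ for the number of off-diagonal points of $Y$ and summing the previous estimate over those $y$,
\[
\frac{N+1}{2}\, n_Y \;\le\; \sum_{i=1}^N \#\{\text{off-diagonal points of } X_i\} \;\le\; N K ,
\]
whence $n_Y \le \frac{2NK}{N+1} < 2K$, which is the claim. (If $K = 0$ all $X_i$ are empty and the statement is degenerate, so one may assume $K \ge 1$.)

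The main thing to be careful about, rather than an obstacle, is the bookkeeping: (i) one must note that Theorem~\ref{thm:median} gives $Y = m_G$ for the matching built from \emph{any} optimal bijections, so no uniqueness of the $\phi_i$ is needed; and (ii) the standing assumption that $N$ is odd is exactly what upgrades ``more than $N/2$'' to ``at least $(N+1)/2$'', which is where the strict inequality $n_Y < 2K$ comes from. Everything else is the disjointness coming from the definition of a matching plus the pigeonhole inequality above.
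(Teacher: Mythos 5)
Your proof is correct and follows essentially the same route as the paper's: apply Theorem~\ref{thm:median} to get that each off-diagonal point of $Y$ is the median of its selection, use Lemma~\ref{lemma:mostdiag} to force at least $(N+1)/2$ off-diagonal entries in each such selection, and conclude by the disjointness/pigeonhole count $(N+1)n/2 \leq NK$, giving $n < 2K$. The two points of care you flag (any choice of optimal bijections suffices, and oddness of $N$ yields the strict inequality) are exactly the right ones.
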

\begin{proof}
Let $y_1, y_2, \ldots y_n$ be the off diagonal points in $Y$. Let $\phi_i$ be optimal bijections between $Y$ and the $X_i$. By Theorem \ref{thm:median}  we know that $y_j$ is the median of $\{\phi_i(y_j)\}$ for each $j$. By Lemma \ref{lemma:mostdiag} we know that for each $j$ the sets $\{\phi_i(y_j)\}$ must contain at least $(N+1)/2$ off diagonal points. This implies that $\cup_j \{\phi_i(y_j)\}$ must contain at least $(N+1)n/2$ points.

Since the combined of total of all the off diagonal points in the $X_i$ is $NK$ we can conclude that $(N+1)n/2\leq NK$ and hence $n<2K$.
\end{proof}

We illustrate this significant advantage of the median with a simulated geometric example. We generated point clouds of the unit circle by drawing $25$ points from the uniform measure on the unit circle convoluted with Gaussian noise with variance $\sigma^2$. We then build the $H_1$ persistence diagrams from the corresponding Rips filtration of this point cloud  (described in the appendix). For the $5$ persistence diagrams thus produced we then computed the mean and the median. These are illustrated in Figure \ref{fig:circle}. 

Since the underlying shape of interest is a circle there should be one point in each persistence diagram far from the diagonal corresponding to the $H_1$ class of the circle, alongside extra ``noisy'' points near the diagonal (with more as the noise parameter in the sampling process increases). In the simulated data the mean and median each have one point far from the diagonal (and these are close to each other) but with larger noise the mean diagram has more extra points near the diagonal than the median diagram.

\begin{center}
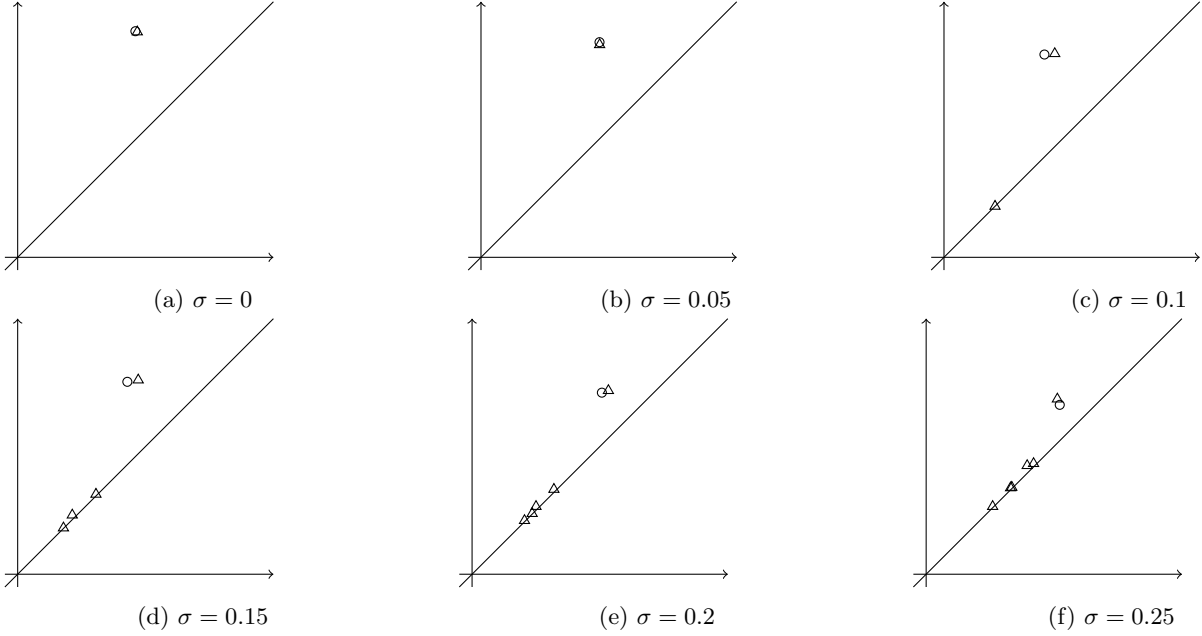
\begin{figure}[h!]
\begin{subfigure}{0.3\textwidth}
\begin{tikzpicture}[scale=1.7]
\draw []   (-0.1,-0.1) -- (2,2);
\draw [->] (-0.1, 0) -- (2,0);
\draw [->] ( 0,-0.1) -- (0,2);
\draw[black] plot[only marks, mark=o, mark size=1pt]coordinates{(.920616941294,1.76959175489) };
\draw[black] plot[only marks, mark=triangle, mark size=1.3pt]coordinates{(.93538,	1.764) };
\end{tikzpicture}
\caption{$\sigma=0$}
\end{subfigure} 
\hspace{0.03\textwidth}
\begin{subfigure}{0.3\textwidth}
\begin{tikzpicture}[scale=1.7]
\draw []   (-0.1,-0.1) -- (2,2);
\draw [->] (-0.1, 0) -- (2,0);
\draw [->] ( 0,-0.1) -- (0,2);
\draw[black] plot[only marks, mark=o, mark size=1pt]coordinates{(0.92689902682,	1.68373772005) };
\draw[black] plot[only marks, mark=triangle, mark size=1.3pt]coordinates{(0.928,			1.664) };
\end{tikzpicture}
\caption{$\sigma=0.05$}
\end{subfigure} \hspace{0.03\textwidth}
\begin{subfigure}{0.3\textwidth}
\begin{tikzpicture}[scale=1.7]
\draw []   (-0.1,-0.1) -- (2,2);
\draw [->] (-0.1, 0) -- (2,0);
\draw [->] ( 0,-0.1) -- (0,2);
\draw[black] plot[only marks, mark=o, mark size=1pt]coordinates{(0.785416370044,	1.58704974453) };
\draw[black] plot[only marks, mark=triangle, mark size=1.3pt]coordinates{(0.8662,1.594)(0.3992,0.3993) };
\end{tikzpicture}
\caption{$\sigma=0.1$}
\end{subfigure} \hspace{0.03\textwidth}
\begin{subfigure}{0.3\textwidth}
\begin{tikzpicture}[scale=1.7]
\draw []   (-0.1,-0.1) -- (2,2);
\draw [->] (-0.1, 0) -- (2,0);
\draw [->] ( 0,-0.1) -- (0,2);
\draw[black] plot[only marks, mark=o, mark size=1pt]coordinates{(0.857607927762,	1.5058382678) };
\draw[black] plot[only marks, mark=triangle, mark size=1.3pt]coordinates{(0.9428,	1.52) (.427, .463) (0.6122, .6258) (0.358, .362) };

\end{tikzpicture}
\caption{$\sigma=0.15$}
\end{subfigure} \hspace{0.03\textwidth}
\begin{subfigure}{0.3\textwidth}
\begin{tikzpicture}[scale=1.7]
\draw []   (-0.1,-0.1) -- (2,2);
\draw [->] (-0.1, 0) -- (2,0);
\draw [->] ( 0,-0.1) -- (0,2);
\draw[black] plot[only marks, mark=o, mark size=1pt]coordinates{(1.0150966146,	1.42097064809) };
\draw[black] plot[only marks, mark=triangle, mark size=1.3pt]coordinates{(1.066,1.4372)(0.64, 0.664) (0.41,0.42) (0.47, 0.475)(0.50, .53)};
\end{tikzpicture}
\caption{$\sigma=0.2$}
\end{subfigure} 
\hspace{0.03\textwidth}
\begin{subfigure}{0.3\textwidth}\begin{tikzpicture}[scale=1.7]
\draw []   (-0.1,-0.1) -- (2,2);
\draw [->] (-0.1, 0) -- (2,0);
\draw [->] ( 0,-0.1) -- (0,2);
\draw[black] plot[only marks, mark=o, mark size=1pt]coordinates{(1.04471840956,	1.32566034849) };
\draw[black] plot[only marks, mark=triangle, mark size=1.3pt]coordinates{(0.52,0.53)(.66,.676) ( .67,.68) (.84 ,.866)(.79, .85)(1.026,1.371)};
\end{tikzpicture}
\caption{$\sigma=0.25$}
\end{subfigure} 
\caption{
For each standard deviation $\sigma$ we randomly generated five noisy point clouds of the circle each  $25$ points drawn i.i.d. from the convolution of the uniform measure on the unit circle convolved with Gaussian noise with standard deviation $\sigma$. From these five point clouds we constructed five Rips filtrations and their $H_1$ persistence diagrams. The corresponding median diagram is then depicted using circles and the mean diagram using triangles.}\label{fig:circle}
 \end{figure}
\end{center}

\subsection{Discontinuities and non-uniqueness of the mean and the median}\label{sec:cont}

Unfortunately both the mean and median are neither continuous nor always unique. There can be  a discontinuities when the grouping $G$ which provides us with the optimal candidate for the mean or the median switches. This is illustrated in the Figures \ref{fig:MedianNotCont} and \ref{fig:MeanNotCont}. 

In these examples we have three diagrams, one consists only of copies of the diagonal, one containing off diagonal points denoted by squares, and the other denoted by  triangles. In this example as $z$ increases in the squares diagram travels across the optimal grouping changes from $\{\{x_1, (1,z),\diag\},\{x_2, \diag, \diag\}\}$ to $\{\{x_1, \diag, \diag\},\{x_2, (1,z), \diag\}\}$ leading to a discontinuity to both the mean and the median (note that the value of $z$ where the switch occurs is different for the mean and median). At the time it switches both groupings are equally optimal and hence we have non-uniqueness.

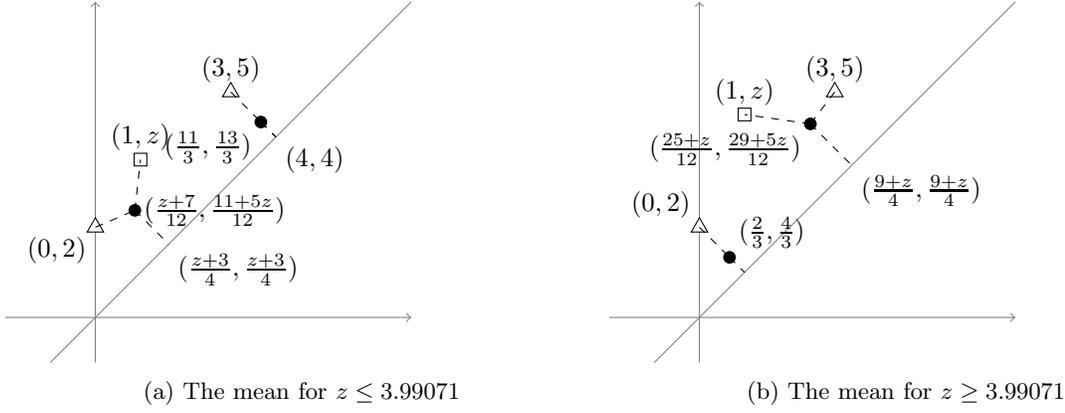
\begin{figure}[hbt]
\begin{subfigure}{0.45\textwidth}

\begin{tikzpicture}[scale=0.6]
\draw[gray, ->] (0,-1)--(0,7);
\draw[gray, ->](-2,0)--(7,0);
\draw[gray](-1,-1)--(7,7);

\draw plot[mark=triangle*,mark options={mark size=6pt, fill=white}]coordinates{
(0,2)} node [below left]  {$(0,2)$};
\draw plot[mark=triangle*,mark options={mark size=6pt, fill=white}]coordinates{
(3,5)} node [above]  {$(3,5)$};

\draw plot[mark=square*,mark options={mark size=4pt, fill=white}]coordinates{
(1, 3.5)} node [above]  {$(1,z)$};


\draw[dashed] (0,2)--(0.875,2.375);
\draw[dashed](0.875,2.375)--(1,3.5);

\draw[dashed] (3,5)--(11/3,13/3);
\draw[dashed] (11/3,13/3)--(4,4) node[below right]{ $(4,4)$};
\draw[dashed] (11/3,13/3)--(4,4);
\fill[] (11/3,13/3) node[below left] {$(\frac{11}{3}, \frac{13}{3})$} circle (4pt);
\draw[dashed](0.875,2.375)--(1.625,1.625) node [below right]{$(\frac{z+3}{4}, \frac{z+3}{4})$};
\fill[](0.875,2.375) node [right]{$(\frac{z+7}{12},\frac{11+5z}{12})$} circle (4pt);
\end{tikzpicture}
\caption{The mean for $z\leq 3.99071$}
\end{subfigure}
\begin{subfigure}{0.45\textwidth}
\begin{tikzpicture}[scale=0.6]
\draw[gray,->] (0,-1)--(0,7);
\draw[gray,->](-2,0)--(7,0);
\draw[gray](-1,-1)--(7,7);


\draw plot[mark=triangle*,mark options={mark size=6pt, fill=white}]coordinates{
(0,2)} node [above left]  {$(0,2)$};
\draw plot[mark=triangle*,mark options={mark size=6pt, fill=white}]coordinates{
(3,5)} node [above]  {$(3,5)$};
\draw plot[mark=square*,mark options={mark size=4pt, fill=white}]coordinates{
(1, 4.5)} node [above]  {$(1,z)$};

\draw[dashed] (0,2)--(2/3,4/3);
\draw[dashed](2/3,4/3)--(1,1);
\fill[] (2/3,4/3) node [above right] {$(\frac{2}{3}, \frac{4}{3})$} circle (4pt);;

\draw[dashed] (3,5)--(29.5/12, 51.5/12);
\draw [dashed](29.5/12, 51.5/12)--(3.375,3.375) node [below right] {$(\frac{9+z}{4},\frac{9+z}{4})$};
\draw[dashed](29.5/12, 51.5/12)--(1,4.5);
\fill[](29.5/12, 51.5/12) node [below left]{$(\frac{25+z}{12}, \frac{29 + 5z}{12})$} circle (4pt);
\end{tikzpicture}
\caption{The mean for $z\geq 3.99071$}\label{fig:meannotunique2}
\end{subfigure}
\caption{We have three diagrams, one consists only of copies of the diagonal, one containing off diagonal points denoted by squares, and the other containing off diagonal points denoted by  triangles. 
 In (a) $F_2(\text{circles}) = \frac{8639 - 3995 z + 1268 z^2}{6534}$ 
and in (b) $F_2(\text{circles}) = \frac{191 -58z + 7z^2}{36}$.
When $z<3.99070$ the optimal grouping is $\{(0,2), (1,z), \diag \}$ and $\{(3,5), \diag, \diag\}$  (used in (a)). When $z>3.99072$ then the optimal grouping is $\{(0,2), \diag, \diag\}$ and $\{(3,5),  (1,z), \diag\}$ (used in (b)). Both groupings are optimal when $z\simeq 3.99071$ and as a result we do not have a unique mean.}\label{fig:MeanNotCont}
\end{figure}

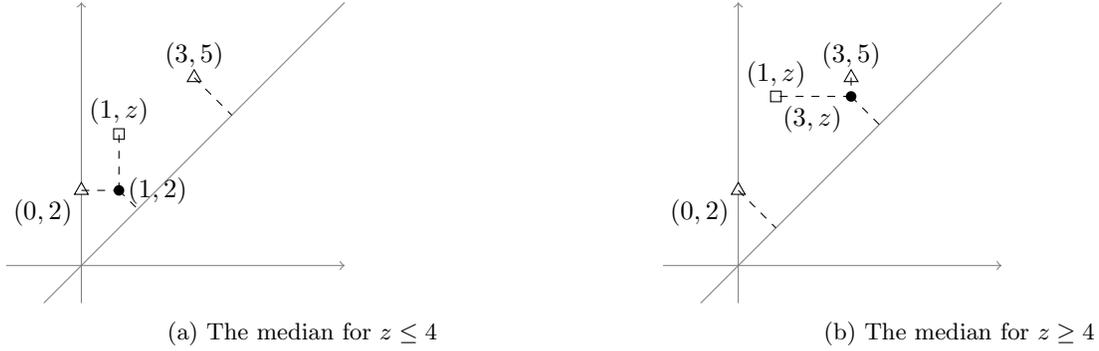
\begin{figure}[h]
\begin{subfigure}{0.45\textwidth}
\begin{tikzpicture}[scale=0.5]
\draw[gray, ->] (0,-1)--(0,7);
\draw[gray, ->](-2,0)--(7,0);
\draw[gray](-1,-1)--(7,7);
%
%

\draw plot[mark=triangle*,mark options={mark size=6pt, fill=white}]coordinates{
(0,2)} node [below left]  {$(0,2)$};
\draw plot[mark=triangle*,mark options={mark size=6pt, fill=white}]coordinates{
(3,5)} node [above]  {$(3,5)$};

\draw plot[mark=square*,mark options={mark size=4pt, fill=white}]coordinates{
(1, 3.5)} node [above]  {$(1,z)$};

\draw[dashed] (0,2)--(1,2);
\draw[dashed](1,2)--(1,3.5);

\draw[dashed] (3,5)--(4,4);
\draw[dashed] (1,2)--(1.5,1.5);
\fill[](1,2) node [right]{$(1,2)$} circle (4pt);
\end{tikzpicture}
\caption{The median for $z\leq4$} \label{fig:mediannotunique1}
\end{subfigure}
\qquad
\begin{subfigure}{0.45\textwidth}
\begin{tikzpicture}[scale=0.5]
\draw[gray, ->] (0,-1)--(0,7);
\draw[gray, ->](-2,0)--(7,0);
\draw[gray](-1,-1)--(7,7);

%

\draw plot[mark=triangle*,mark options={mark size=6pt, fill=white}]coordinates{
(0,2)} node [below left]  {$(0,2)$};
\draw plot[mark=triangle*,mark options={mark size=6pt, fill=white}]coordinates{
(3,5)} node [above]  {$(3,5)$};

\draw plot[mark=square*,mark options={mark size=4pt, fill=white}]coordinates{
(1, 4.5)} node [above]  {$(1,z)$};

\draw[dashed] (0,2)--(1,1);
\draw[dashed](3,4.5)--(1,4.5);

\draw[dashed] (3,5)--(3,4.5);
\draw [dashed] (3,4.5)--(3.75,3.75);

\fill[](3,4.5) node [below left]{$(3,z)$} circle (4pt);
\end{tikzpicture}
\caption{The median for $z\geq4$}\label{fig:mediannotunique2}
\end{subfigure}
\caption{
We have three diagrams, one consists only of copies of the diagonal, one containing off diagonal points denoted by squares, and the other containing off diagonal points denoted by  triangles.  When $z<4$ the optimal grouping is $\{(0,2), (1,z), \diag \}$ and $\{(3,5), \diag, \diag\}$ (the grouping used in (a)). When $z>4$ then the optimal grouping is $\{(0,2), \diag, \diag\}$ and $\{(3,5),  (1,z), \diag\}$  (the grouping used in (b)).  Both are optimal when $z=4$ and as a result we do not have a unique median.}\label{fig:MedianNotCont}
\end{figure}

%
%

The mean is generically unique but the median is not. To show this rigorously we shall restrict ourselves to the case where we have $N$ diagrams each with only finitely many off diagonal points. Let $k_1, k_2, \ldots k_N$ be non-negative integers.  Let $U(k_1, k_2, \ldots, k_N)$ denote the space of sets of diagrams $X=\{X_1, X_2, \ldots X_N\}$ such that $X_i$ has $k_i$ off diagonal points. $U(k_1, k_2, \ldots k_N)$ is the quotient of  $(\R^{2+})^{k_1 + k_2 + \ldots k_N}$ by a finite group of symmetries $\Gamma$. There is a quotient map 
\[q\colon (\R^{2+})^{k_1 + k_2 + \ldots k_N} \to U(k_1, k_2, \ldots k_N)= (\R^{2+})^{k_1 + k_2 + \ldots k_N} /\Gamma.\]
 Let $\lambda$ be Lebesgue measure on $(\R^{2+})^{k_1 + k_2 + \ldots k_N}$ and let $\rho=q_*(\lambda)$ be the push forward of Lebesgue measure onto $U(k_1, k_2, \ldots k_N)$.

\begin{proposition}
The sets of diagrams in $U(k_1, k_2, \ldots k_N)$ which do not have a unique mean has measure zero.
\end{proposition}
\begin{proof}
Let $\tilde{A}$ be the set of sets of diagrams in $U(k_1, k_2, \ldots k_N)$ which do not have a unique mean. Then $A=q^{-1}(\tilde{A})$ is the set of vectors of labelled diagrams (in $(\R^{2+})^{k_1 + k_2+ \ldots k_N}$) which do not have a unique mean.
Since $\rho(\tilde{A}) = \lambda(q^{-1}(\tilde{A}))$ it is sufficient to show $\lambda(A)=0$.


Let $S$ be a selection containing the points $\{(a_1,b_1), (a_2, b_2), \ldots (a_k,b_k)\}$ with $N-k$ copies of the diagonal. 
In the appendix we define the mean of the selection $S$ as the minimizer of $f_S(y)= \sum_{x\in S} \|x-y\|_2^2.$ which occurs at
$$\mu_S=\left(\frac{1}{N}\left(k\hat{x} + (N-k)\frac{\hat{x}+\hat{y}}{2}\right),\,\frac{1}{N}\left({k\hat{y} + (N-k)\frac{\hat{x}+\hat{y}}{2}}\right)\right)$$
where $\hat{x}$ and $\hat{y}$ are the means of $a_1, a_2, \ldots a_k$ and $b_1, b_2, \ldots b_k$ respectively. 


For each pair of distinct groupings, $G_1$ and $G_2$, of the labelled diagrams let $A(G_1, G_2)=\left\{X=(X_1, X_2, \ldots X_N)\colon \sum_{S\in G_1} f_S(\mu_S)=\sum_{S\in G_2} f_S(\mu_S)\right\}.$
Any $X\in A(G_1, G_2)$ must satisfy a quadratic equation so either $A(G_1, G_2)=(\R^{2+})^{k_1+k_2+\ldots k_N}$ or $\lambda(A(G_1, G_2))=0$. It is clear that there exists a vector of labelled persistence diagrams $X=(X_1, X_2, \ldots X_N) \in (\R^{2+})^{k_1+k_2+\ldots k_N}$ such that $X\notin A(G_1,G_2)$ we conclude that $\lambda(A(G_1, G_2))=0$. 

If $X$ has more than one mean then by Proposition \ref{thm:meannecc} there must be groupings $G_1,G_2$ such that $\mu_{G_1}\neq \mu_{G_2}$ but
$\sum_{S\in G_1}f_S(\mu_S)=F_2(\mu_{G_1})=F_2(\mu_{G_2})=\sum_{S\in G_2}f_S(\mu_S).$
This implies $A \subseteq \bigcup_{G_1\neq G_2  \,\text{groupings}} A(G_1, G_2).$ There are only finitely many groupings so $\lambda (A) =0$.
\end{proof}

This proof of generic uniqueness contrasts sharply to the case of the median which is not generically unique.

\begin{proposition} Let $N\geq 3$ be an odd number. 
Let $k_1, k_2, \ldots ,k_{(N+1)/2}\geq 2$. The sets of diagrams in $U(k_1, k_2, \ldots k_N)$ which do not have a unique median has positive measure.
\end{proposition}
\begin{proof}
%
%
We will first illustrate this with the case $U(2,2,0)$ which shows the idea of the general case. Suppose $X_1$  and $X_2$ each contain two off diagonal points $\{(a_1, a_2),(a_2, b_2)\}$, and  $\{(c_1, d_1) , (c_2, d_2)\}$ respectively, and $X_3$ has no off diagonal points. Further suppose that 
$a_1, a_2 < c_1, c_2 \leq b_1, b_2 <d_1, d_2.$ 

First consider the grouping $G_1=\{S_{(1,2)}, S_{(2,1)}\}$ where  $S_{(1,2)}:=\{(a_1, b_1), (c_2, d_2) ,\diag\}$ and $S_{(2,1)}:=\{(a_2, b_2), (c_1, d_1) ,\diag\}$. The median of the selection $S_{(1,2)}$ is $(c_2, b_1)$ and the median of the selection $S_{(2,1)}$ is  $(c_1, b_2)$. This implies that  $m_{G_1}$ has off-diagonal points $\{(c_2, b_1), (c_1, b_2)\}$. Also consider the grouping $G_1=\{S_{(1,2)}, S_{(2,1)}\}$ where  $S_{(1,2)}:=\{(a_1, b_1), (c_2, d_2) ,\diag\}$ and $S_{(2,1)}:=\{(a_2, b_2), (c_1, d_1) ,\diag\}$. Analogous calculations show the off-diagonal points of $m_{G_2}$ are $\{(c_1, b_1), (c_2, b_2)\}$. These groupings are illustrated in Figures \ref{fig:reallynotunique1} and \ref{fig:reallynotunique2}.


\begin{figure}[h]
\begin{subfigure}{0.45\textwidth}
\begin{tikzpicture}[scale=0.5]
\draw[gray, ->](-1,0)--(8,0);
\draw[gray, ->](0,-1)--(0,8);
\draw[gray](-1,-1)--(8,8);

\draw plot[mark=triangle*,mark options={mark size=6pt, fill=white}]coordinates{
(-0.5,5.5) } node [above left]  {$(a_1, b_1)$};
\draw plot[mark=square*,mark options={mark size=4pt, fill=white}]coordinates{
 (2.5 , 7.5)} node [above left]  {$(c_1, d_1)$};

\draw plot[mark=triangle*,mark options={mark size=6pt, fill=white}]coordinates{
 (0.5,5) } node [below left]  {$(a_2, b_2)$};
\draw plot[mark=square*,mark options={mark size=4pt, fill=white}]coordinates{
 (3 , 7)} node [above right]  {$(c_2, d_2)$};
%

\draw[dashed] (-1,5.5)--(8,5.5);
\draw[dashed] (-1,5)--(8,5);
\draw[dashed](2.5,-1)--(2.5,8);
\draw[dashed] (3,-1)--(3,8);

\fill[] (2.5,5) node[below left]{$(c_1, b_2)$} circle (4pt);
\fill[] (3,5.5) node[above right]{$(c_2, b_1)$} circle (4pt);

\end{tikzpicture}
\caption{$m_{G_1}$}\label{fig:reallynotunique1}
\end{subfigure}
\begin{subfigure}{0.45\textwidth}\begin{tikzpicture}[scale=0.5]
\draw[gray,->](-1,0)--(8,0);
\draw[gray,->](0,-1)--(0,8);
\draw[gray](-1,-1)--(8,8);

\draw plot[mark=triangle*,mark options={mark size=6pt, fill=white}]coordinates{
(-0.5,5.5) } node [above left]  {$(a_1, b_1)$};
\draw plot[mark=square*,mark options={mark size=4pt, fill=white}]coordinates{
 (2.5 , 7.5)} node [above left]  {$(c_1, d_1)$};

\draw plot[mark=triangle*,mark options={mark size=6pt, fill=white}]coordinates{
 (0.5,5) } node [below left]  {$(a_2, b_2)$};
\draw plot[mark=square*,mark options={mark size=4pt, fill=white}]coordinates{
 (3 , 7)} node [above right]  {$(c_2, d_2)$};
 
%

\draw[dashed] (-1,5.5)--(8,5.5);
\draw[dashed] (-1,5)--(8,5);
\draw[dashed](2.5,-1)--(2.5,8);
\draw[dashed] (3,-1)--(3,8);

\fill[] (2.5,5.5) node[above left]{$(c_1, b_1)$} circle (4pt);
\fill[] (3,5) node[below right]{$(c_2, b_2)$} circle (4pt);

\end{tikzpicture}
\caption{$m_{G_2}$}\label{fig:reallynotunique2}
\end{subfigure}
\begin{subfigure}{0.8\textwidth}
\begin{center}
\begin{tikzpicture}[scale=0.6]
\draw[->](-1,0)--(10,0);
\draw[->](0,-1)--(0,10);
\draw[](-1,-1)--(10,10);

\fill[lightgray] (1, 4) rectangle (2, 5);
\node at (1.5,4.5) {$1$};
\fill[lightgray] (2,5) rectangle (3,6);
\node at (2.5,5.5) {$2$};
\fill[lightgray] (0,6) rectangle (1,7);
\node at (0.5,6.5) {$3$};
\fill[pattern=crosshatch] (7.5,7.5)--(7.5,10)--(10,10);
\end{tikzpicture}
\end{center}
\caption{The different regions of $\R^{2+}$ for constructing sets of populations of persistence diagrams with non unique medians.}\label{fig:generallynotunique}
\end{subfigure}
\end{figure}

Now $F_1(m_{G_1})=-a_1 + d_1 -a_2 + d_2=F_1(m_{G_2})$ and that $F_1(m_{G})\geq -a_1 + d_1 -a_2 + d_2$ for all other groupings $G$. This implies that $m_{G_1}$ and $m_{G_2}$ are both medians of $X$. If $b_1\neq b_2$ and $c_1\neq c_2$ these medians are distinct and thus we do not have a unique median. The measure of such sets of diagrams $\{X_1, X_2, X_3\}$ has non-zero measure in $U(2,2,0)$.
%
%

The extension of this example to when $k_1, k_2, \ldots k_{(N+1)/2}>2$ is illustrated in Figure \ref{fig:generallynotunique}. We need to find an example of a non-zero measure set of $(X_1, X_2, \ldots X_N) \in (\R^{2+})^{k_1+ k_2 + \ldots + k_N}$ with $k_1, k_2, \ldots k_{(N+1)/2}>2$  with non unique medians. 

We will require that: 
\begin{itemize}
\item $X_1$ has two points $(a_1, b_1)$ and $(a_2, b_2)$ in region $1$,
\item $X_2$ has two points $(c_1, d_1)$ and $(c_2, d_2)$ in region $2$,
\item $X_3, X_4 \ldots X_{(N-3)/2}$  each contains two points in region $3$, and 
\item every other off-diagonal point in the $X_i$ lies in the region patterned by crosshatch.
\end{itemize}
Note that this set of populations of persistence diagrams is of non-zero measure in $U(k_1, k_2, \ldots k_N)$.

Every median of $\{X_i\}$ can be written as $m_G$ where each selection in $G$ contains either points in the cross-hatch region (and potentially copies of the diagonal) or they contain one point each from regions $1$ and $2$, $(N-3)/2$ points from region $3$, and $(N-1)/2$ copies of the diagonal. There is a is a median $m$ with off diagonal points $\{(c_1, b_1), (c_2, b_2)\}$ alongside other points determined by the points in the cross-hatched region. Another median $\tilde{m}$ is the same as $m$ but switching  $\{(c_1, b_1), (c_2, b_2)\}$ for  $\{(c_2, b_1), (c_1, b_2)\}$.

\end{proof}


\section{Discussion and further directions}
There are many parallels between the mean and median of populations of persistence diagrams. This suggests some future directions could involve extending work that has been done on the mean to the corresponding results for the median. For example, in \cite{munch2015probabilistic} they explore an alternative probabilistic definition of the mean which combines the tradition mean used with the notion of a shaking hand equilibrium in game theory. This alternate definition is unique and continuous. We believe a similar idea would work to create a probabilistic definition of the median.
%

Another future direction is to combine the median with sampling theorems to find conditions to infer the correct homology with high probability. For example, the homology of a set can be inferred from the persistence diagram corresponding to a point cloud with small Hausdorff distance to the original set. Under certain sampling conditions we can ensure that this Hausdorff distance is small with high probability. Perhaps with higher probability the median of independently obtained persistence diagrams under such samplings conditions will provide the correct homology.

There is scope for further developments in algorithms, both in design and implementation. In this paper we have discussed very referred to some of the computational aspects, including mentioning a brute force algorithm and a gradient descent approach. Perhaps there could be significant improvements by using geometry analogous to the work in \cite{kerber2017geometry} where they show that by exploiting the inherit geometry of the points in persistence diagrams lying in a plane we can approximate the Wasserstein distances much faster. 

\bibliographystyle{plain}	
\bibliography{standardised_bib}

\appendix
\section{Rips filtration}

The Rips filtration $\mathcal{R}$ is the filtration of the flag complex on $25$ vertices where at time $t$, $\mathcal{R}_t$ contains all the vertices $[v]$, the edges $[v_0, v_1]$ whenever $\|v_0-v_1\|<t$, the 2-simplicies $[v_0,v_1,v_2]$ whenever $[v_0,v_1]$, $[v_1, v_2]$ and $[v_2, v_0]$ are all included in $\mathcal{R}_t$, and so on including higher dimensional simplicies whenever all their boundary faces are in $\mathcal{R}_t$.

In other words $\mathcal{R}_t$ is the flag complex  (also known as the clique complex) on the graph containing all edges of length at most $t$.

\section{Mean diagram} 

The methods here provide a proof for the necessary condition for a persistence diagram to be a local minimum of of the Fr\'echet function which is far simpler than that in \cite{turner2014frechet}. We also extend the results to persistence diagrams containing points in $\L_\infty$ and $\L_{-\infty}$. Due to the similarities to the earlier material we omit many of the details.

For the mean we also split our analysis into the restrictions to $\R^{2+}\cup\diag$, $\L_{\infty}$ and $\L_{-\infty}$. If $X_1, X_2, \ldots, X_N\in \D^{(k,l)}$ then $Y$ is a mean of the $X_i$ if and only if $Y|_{\R^{2+}\cup \diag}, Y|_{\L_{\infty}}$, and  $Y|_{\L_{-\infty}}$ are means of the $X_i$ each restricted to the appropriate domain. 

As $\L_{\infty}$ and $\L_{-\infty}$ are effectively copies of $\R$ we can easily characterize the means of populations of multisets in them. Suppose $A_1, A_2, \ldots , A_N$ are each multisets of exactly $k$ real numbers and we label the elements of each $A_i$ so that $A_i = \{a_{i,1}, a_{i,2}, \ldots, a_{i,k}\}$ with $a_{i,1} \leq a_{i,2} \leq \ldots \leq a_{i,k}$. Set $B= \{b_1, b_2, \ldots, b_k\}$ where $b_j$  is the mean of $\{a_{1,j}, \ldots a_{N,j}\}$. Then $B$ is the unique multiset of $k$ real numbers that minimizes
\[f_2\colon Y \mapsto \sum_{i=1}^N \left( \inf_{\phi\colon A_i \to Y, \phi \text{ bijection}} \sum_{a\in A_i} |a-\phi (a)|^2\right)\]  and hence $B$ is the mean.

Characterizing the means of populations of persistence diagrams in $\D^{(0,0)}$ is analogously achieved through the means of selections.

\begin{lemma}\label{lemma:meanselection}
Let $(a_1, b_1), (a_2, b_2), \ldots, (a_k, b_k)$ be points in the plane. Let $\hat{x}$ be the mean of $a_1, a_2 \ldots a_k$ and $\hat{y}$ be the mean of $b_1, b_2, \ldots , b_k$. Then 
\[(\tilde{x},\tilde{y})\colon =\left(\frac{1}{N}\left(k\hat{x} + (N-k)\frac{\hat{x}+\hat{y}}{2}\right),\,\frac{1}{N}\left( k\hat{y} + (N-k)\frac{\hat{x}+\hat{y}}{2}\right)\right)\]
 is the unique point in $\R^{2+}$  which minimizes
\[f(x,y) =\sum_{i=1}^k \|(x,y)-(a_i,b_i)\|_2^2+ \sum_{i=k+1}^N\|(x,y)-\diag\|_2^2.\]
\end{lemma}

Let $S$ be a mulitset in $\R^{2+}\cup \diag$ containing $(a_1, b_1), (a_2, b_2), \ldots, (a_k, b_k)$ and $N-k$ copies of the diagonal and let $(x,y)$ be the point in $\R^{2+}$ found in Lemma \ref{lemma:meanselection}. We call this $(x,y)$ the \emph{mean} of $S$ and denote it by $\mu_S$.

Let $\mu(G)$ be the persistence diagram which contains $\{\mu_{S_j}:S_j\in G\}$. Each grouping $G$ produces a candidate $\mu(G)$ for the mean. We will show that any mean must be $\mu(G)$ for some grouping $G$.
\begin{figure}[hbt]
\begin{center}
\begin{tikzpicture}[scale=.8]
\draw[->] (0,-1)--(0,7);
\draw[->](-1,0)--(7,0);
\draw[](-1,-1)--(7,7);

\draw (1,4) circle (4pt);
\draw[dashed](1,4)--(1.7,4.2) ;
\draw[dashed](2.4,5)--(1.7,4.2);
\draw[dashed](1.7,3.6)--(1.7,4.2);
\draw (2.4,5) circle (4pt);
\draw (1.7,3.6) circle (4pt);
\draw plot[mark=square*, mark options={mark size=3.7pt, fill=gray}] coordinates{
(1.7,4.2)};

\draw plot[mark=diamond*,mark options={mark size=5pt, fill=white}] coordinates{
(2.95,2.95)};

\draw[dashed](2.95,2.95)--(1.7,4.2);

\draw plot[mark=triangle*,mark size=5pt] coordinates{
(2.2,3.7)};

\end{tikzpicture}
\caption{We want the mean of the three points marked by circles alongside two copies of the diagonal. The gray square is the arithmetic mean of the three points marked by circles. The diamond is the point on the diagonal closest to the square. The triangle is the mean of the circles and $2$ copies of the diamond. It is the weighted average of the square and the diamond.}
\end{center}
\end{figure}
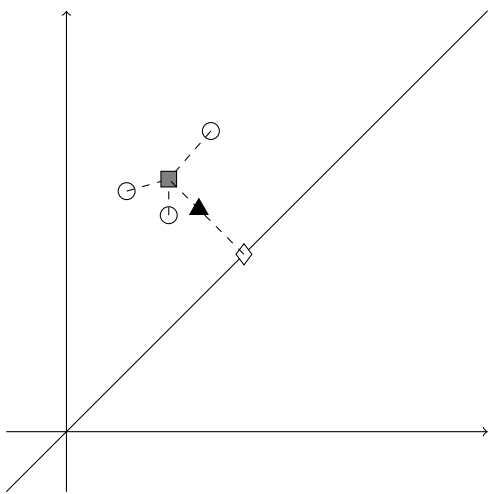

\begin{theorem}\label{thm:meannecc}
Let $X_1, \ldots, X_N, Y\in \D^{(0,0)}$ be persistence diagrams each with finitely many off diagonal points. Let $F_2:Z \mapsto \frac1N \sum_{i=1}^N d_2(X_i,Z)^2$. For each $i$ fix an optimal bijection $\phi_i:Y \to X_i$ (for $d_2$). For each $y\in Y$ we have a selection $\{\phi_i(y)\}$. Let $G_Y$ be the grouping $\{ \{\phi_i(y)\}\colon y\in Y\}$. If $Y$ is a local minimum of $F_2$ then $Y=\mu_{G_Y}$.
\end{theorem}

\begin{proof}
Suppose that $Y\neq \mu_{G_Y}$ and thus $y_0\neq \mu_{\{\phi_i(y_0)\}}$ for some $y_0\in Y$.  Set $Y_t$ to be the diagram which agrees with $Y$ except the point $y_0$ is replaced with $(1-t)y_0 + t \mu_{\{\phi_i(y_0)\}}$. 
\begin{align*}
F_2(Y_t)&=\frac{1}{N}\sum_{i=1}^N \inf_{\phi_i^t:Y_t\to X_i} \sum_{y_t\in Y_t} \|y_t-\phi_i^t(y_t)\|^2\\
&\leq \frac{1}{N}\sum_{i=1}^N \sum_{y\in Y, y\neq y_0} \|y-\phi_i(y)\|^2 + (((1-t)y_0 + t \mu_{\{\phi_i(y_0)\}})-\phi_i(y_0))^2
\end{align*}
We thus can conclude that for all $t\in (0,1)$
\begin{align*}
F_2(Y_t)-F_2(Y_0) \leq  \frac{1}{N}\sum_{i=1}^N(y_0-\phi_i(y_0))^2 - (((1-t)y_0 + t \mu_{\{\phi_i(y_0)\}})-\phi_i(y_0))^2
\end{align*}
which we know is negative from the proof of Lemma \ref{lemma:meanselection}. This implies that $Y_0=Y$ can not be a local minimum. 
\end{proof}

\end{document}